\def\k{{\Bbbk}}
\def\g{{\mathfrak g}}
\def\h{{\mathfrak h}}
\def\t{{\mathfrak t}}
\def\Q{{\mathcal Q}}
\def\m{{\mathfrak m}}
\def\n{{\mathfrak n}}
\def\p{{\mathfrak p}}
\def\K{{\mathbb K}_p}
\def\Z{{\mathbb Z}}
\def\Lie{\mathop{\fam0 Lie}}
\newcommand{\too}{\,\,\longrightarrow\,\,}
\theoremstyle{plain}
\newtheorem{theorem}{Theorem}[section]
\newtheorem{corollary}{Corollary}[section]
\newtheorem{prop}{Proposition}[section]
\theoremstyle{definition}
\theoremstyle{remark}
\newtheorem{rem}{Remark}[section]
\def\subtitle#1. {{\medskip\bf#1\par\nobreak\smallskip}}
\def\proclaim#1. {\medbreak\bgroup\noindent\bf#1. \it}
\def\endproclaim{\egroup
\ifdim\lastskip<\medskipamount\removelastskip\medskip\fi}
\def\citedef#1 {\advance\citation by1
  \expandafter\edef\csname#1\endcsname{{\the\citation}}
  \checkendcitedef}
\def\checkendcitedef#1{\ifx#1\endcitedef\else\citedef#1\fi}
\def\cite#1{\csname#1\endcsname}
\newtoks\nextauth
\newif\iffirstauth
\def\checkendauth#1{\ifx\endauth#1
        \iffirstauth\the\nextauth
        \else{} and \the\nextauth\fi,
    \else\iffirstauth\the\nextauth\firstauthfalse
        \else, \the\nextauth\fi
        \expandafter\auth\expandafter#1\fi}
\def\auth#1 #2 {\nextauth={#1 #2}\checkendauth}
\newif\ifinbook
\newif\ifbookref
\def\nextref#1 {\bookreffalse\inbookfalse
    \bibitem[\cite{#1}]{}
    \firstauthtrue
    \ignorespaces}
\def\paper#1{{\it#1,}}
\def\In#1{\inbooktrue In #1,}
\def\book#1{\bookreftrue{\it#1,}}
\def\journal#1{#1\ifinbook,\fi}
\def\bookseries#1{#1,}
\def\Vol#1{\ifbookref Vol. #1,\else\ifinbook Vol. #1,\else{\bf#1}\fi\fi
    \space\ignorespaces}
\def\nombre#1{no. #1}
\def\publisher#1{#1,}
\def\Year#1{\ifbookref #1.\else\ifinbook #1,\else(#1)\fi\fi
    \space\ignorespaces}
\def\Pages#1{\ifinbook pp. #1.\else #1.\fi}
\begin{document}
\title{Modular Lie algebras and the Gelfand--Kirillov conjecture}
\author{Alexander Premet}
\thanks{\nonumber{\it Mathematics Subject Classification} (2000 {\it revision}).
Primary 17B35. Secondary 17B20, 17B50.}
\address{School of Mathematics, University of Manchester, Oxford Road,
M13 9PL, UK} \email{sashap@maths.man.ac.uk}
\begin{abstract}
\noindent Let $\g$ be a finite dimensional simple Lie algebra over
and algebraically closed field $\mathbb K$ of characteristic $0$.
Let $\g_{\Z}$ be a Chevalley $\Z$-form of $\g$ and
$\g_{\k}=\g_{\Z}\otimes_{\Z}\,\k$, where $\k$ is the algebraic
closure of ${\mathbb F}_p$. Let $G_\k$ be a simple, simply connected
algebraic $\k$-group with $\Lie(G_\k)\,=\,\g_\k$. In this paper, we
apply recent results of Rudolf Tange on the fraction field of the
centre of the universal enveloping algebra $U(\g_\k)$ to show that
if the Gelfand--Kirillov conjecture (from 1966) holds for $\g$, then
for all $p\gg 0$ the function field $\k(\g_\k)$ on the dual space
$\g_\k$ is purely transcendental over its subfield
$\k(\g_\k)^{G_\k}$. Very recently, it was proved by
Colliot-Th{\'e}l{\`e}ne--Kunyavski$\breve{\i}$--Popov--Reichstein
that the function field ${\mathbb K}(\g)$ is {\it not} purely
transcendental over its subfield ${\mathbb K}(\g)^\g$ provided that
$\g$ is of type ${\rm B}_n$, $n\ge 3$,\, ${\rm D}_n$, $n\ge 4$,\,
${\rm E}_6$,\, ${\rm E}_7$,\, ${\rm E}_8$ or ${\rm F}_4$. We prove a
modular version of this result (valid for $p\gg 0$) and use it to
show that, in characteristic $0$, the Gelfand--Kirilov conjecture
fails for the simple Lie algebras of the above types. In other
words, if $\g$ of type ${\rm B}_n$, $n\ge 3$,\, ${\rm D}_n$, $n\ge
4$,\, ${\rm E}_6$,\, ${\rm E}_7$,\, ${\rm E}_8$ or ${\rm F}_4$, then
the Lie field of $\g$ is more complicated than expected.
\end{abstract}
\maketitle

\section{\bf Introduction and preliminaries}\label{intro}

\smallskip

\subsection{}
\noindent Let $\mathbb{K}$ be an algebraically closed field. Given a
Lie algebra $L$ over $\mathbb{K}$ we denote by $U(L)$ the universal
enveloping algebra of $L$. Since $U(L)$ is a Noetherian domain, it
admits a field of fraction which we shall denote by
$\mathcal{D}(L)$. Let ${\bf A}_r(\mathbb{K})$ denote the $r$-th Weyl
algebra over $\mathbb K$ (it is generated over $\mathbb K$ by $2r$
generators $u_1,\ldots, u_r, v_1,\ldots, v_r$ subject to the
relations $[u_i,u_j]=[v_i,v_j]=0$ and $[u_i,v_j]=\delta_{ij}$ for
all $i,j\le r$). Given a collection of free variables $y_1,\ldots,
y_s$ we define $${\mathbf
A}_{r,s}(\mathbb{K}):=\mathbf{A}_r(\mathbb{K})\otimes\mathbb{K}[y_1,\ldots,y_s].$$
Being a Noetherian domain the algebra $\mathbf{A}_{r,s}(\mathbb{K})$
also admits a field of fractions denoted
$\mathcal{D}_{r,s}(\mathbb{K})$.

\medskip

In [\cite{GK1}], Gelfand and Kirillov put forward the following {\it
Hypoth{\`e}se fondamentale}:

\medskip

\noindent {\sc The Gelfand--Kirillov conjecture.} {\it If ${\rm
char}(\mathbb{K})=0$ and $L$ is the Lie algebra of an algebraic
$\mathbb{K}$-group, then $\mathcal{D}(L)\,\cong
\,\mathcal{D}_{r,s}(\mathbb{K})$ for some $r,s$ depending on $L$}.

\medskip

\noindent If the Gelfand--Kirillov conjecture holds for $L$, then
necessarily $$s={\rm index}\,L={\rm
tr.\,deg}(Z(\mathcal{D}(L)),\quad r=\frac{1}{2}(\dim\,L-{\rm
index}\,L),
$$ where $Z(\mathcal{D}(L))$ is the centre of $\mathcal{D}(L)$; see [\cite{O}] for more
detail.

In [\cite{GK1}], the conjecture was settled for nilpotent Lie
algebras, $\mathfrak{sl}_n$ and $\mathfrak{gl}_n$. In 1973, the
conjecture was confirmed in the solvable case independently by Borho
[\cite{BGR}], Joseph [\cite{Jo}] and McConnell [\cite{Mc}]. In 1979,
Nghiem considered the semi-direct products of $\mathfrak{sl}_n$,
$\mathfrak{sp}_{2n}$ and $\mathfrak{so}_{n}$ with their standard
modules and proved the conjecture for those; see [\cite{Ng}].

\smallskip

\subsection{}
\noindent A breakthrough in the general case came in 1996 when
Jacques Alev, Alfons Ooms and Michel Van den Bergh constructed a
series of counterexamples to the conjecture, focusing on semi-direct
products of the form $L=\Lie(H)\ltimes V$ where $H$ is a simple
algebraic group and $V$ is a rational $H$-module admitting a trivial
generic stabilizer ($V$ is regarded as an abelian ideal of $L$). The
smallest known counterexample is the $9$-dimensional semi-direct
product of $\mathfrak{sl}_2$ with a direct sum of two copies of the
adjoint module. In [\cite{AOV1}], Alev, Ooms and Van den Bergh
proved that the conjecture holds in dimension $\le 8$.

However, despite considerable efforts the validity of the
Gelfand--Kirillov conjecture in the case of a {\it simple} Lie
algebra $L\not\cong\mathfrak{sl}_n$ remained a complete mystery
until now. It suffices to say that the answer is unknown already for
$L=\mathfrak{sp}_4$. A weaker positive result in the case of $L$
simple was obtained by Gelfand and Kirillov in 1968. They proved in
[\cite{GK2}] that there exists a finite field extension $F$ of the
centre $Z(\mathcal{D}(L))$ such that the field of fractions of
$\mathcal{D}(L)\otimes_{Z(\mathcal{D}(L))}\,F$ is isomorphic to
$\mathcal{D}_{N,l}(\mathbb{K})$, where $l$ is the rank of $L$ and
$N=\frac{1}{2}(\dim L-l)$. It is conjectured in [\cite{AOV}] that
such a weakened version of the conjecture should hold for any
algebraic Lie algebra $L$. At the opposite extreme, it was proved in
[\cite{Co}] for $L$ simple that the obvious analogue of the
Gelfand--Kirillov conjecture holds for the fraction fields of the
largest primitive quotients of $U(L)$.

\smallskip

\subsection{}
\noindent As the author first learned from Jacques Alev,  the
Gelfand--Kirillov conjecture makes perfect sense in the case where
the base field $\mathbb K$ has characteristic $p>0$ (there is no
need to make any changes in the formulation as all objects involved
exist in any characteristic). In principle, the problem can be
stated for any finite dimensional restricted Lie algebra, but in
what follows I am going to focus on the case where $L=\g_p$ is the
Lie algebra of a simple, simply connected algebraic
$\mathbb{K}$-group $G_p$.

The Lie algebra $\g_p=\Lie(G_p)$ carries a canonical $p$-th power
map $x\mapsto x^{[p]}$ equivariant under the adjoint action of
$G_p$. The elements $x^p-x^{[p]}$ with $x\in \g$ generate a large
subalgebra of the centre $Z(\g_p)$ of the universal enveloping
algebra $U(\g)$, called the $p$-centre of $U(\g_p)$ and denoted
$Z_p(\g)$. It follows from the PBW theorem that $U(\g_p)$ is free
module of finite rank over $Z_p(\g_p)$. Let $\Q(\g_p)$ denote the
field of fractions of $Z(\g_p)$. It is well known that under very
mild assumptions on $G_p$ one has that $\mathcal{D}(\g_p)\,\cong\,
U(\g_p)\otimes_{Z(\g_p)}\Q(\g_p)$ is a central division algebra of
dimension $p^{n-l}$ over the field $\Q(\g_p)$, where $n=\dim\,\g_p$
\and $l={\rm rk}\,G_p$; see [\cite{Zass}, \cite{KW}] for more
detail.

It is known (and easily seen) that if the Gelfand--Kirillov
conjecture hold for $\g_p$, then the field $\Q(\g_p)$ is purely
transcendental over $\mathbb K$ and the order of the similarity
class of $\mathcal{D}(\g_p)$ in the Brauer group ${\rm
Br}(\Q(\g_p))$ equals $p$; see [\cite{PT}, \cite{Bois}] for more
detail. At the Durham Symposium on Quantum Groups in July 1999, Alev
asked the author whether the field $\Q(\g_p)$ is purely
transcendental over $\mathbb K$. The question was, no doubt,
motivated by the Gelfand--Kirillov conjecture.

In [\cite{PT}], Rudolf Tange and the author answered Alev's question
in affirmative for $\g_p=\mathfrak{gl}_n$ and for
$\g_p=\mathfrak{sl}_n$ with $p\nmid n$. Using our result Jean-Marie
Bois was able to confirm the modular Gelfand--Kirillov conjecture in
these cases; see [\cite{Bois}]. Recently, Tange [\cite{T}] solved
Alev's problem for any simple, simply connected group $G_p$ subject
to some (very mild) assumptions on $p$. In [\cite{T}], he also
proved that the centre $Z(\g_p)$ is a unique factorisation domain,
thus confirming an earlier conjecture of Braun--Hajarnavis; see
[\cite{BH}, Conjecture~E].

\smallskip

\subsection{}
\noindent Let $\g$ be a characteristic $0$ counterpart of $\g_p$, a
simple Lie algebra which has the same root system as $G_p$. Although
proving the Gelfand--Kirillov conjecture for $\g_p$ would probably
have little impact on its validity for $\g$ (apart from some
heuristic evidence), it turns out that {\it disproving} the
conjecture for $\g_p$ for almost all $p$ is sufficient for refuting
the original conjecture for $\g$.

In what follows we assume that $\mathbb K$ is an algebraically
closed field of characteristic $0$ and denote by $\k$ the algebraic
closure of the prime field ${\mathbb F}_p$. We let $\g_\Z$ be a
Chevalley $\Z$-form associated with a minimal admissible lattice in
$\g$ and set $\g_\k:=\g_\Z\otimes_\Z\k$. Then
$\g\cong\g_\Z\otimes_\Z\mathbb{K}$ and $\g_\k=\Lie(G_\k)$ for some
simple, simply connected algebraic $\k$-group $G_\k$ of the same
type as $\g$.

In Section~2 we prove a reduction theorem which states that if the
Gelfand--Kirillov conjecture holds for $\g$, then it holds for
$\g_\k$ for almost all $p$. In Section~3, we apply Tange's results
[\cite{T}] to show that if the modular Gelfand--Kirillov conjecture
holds for $\g_\k$, then the field $\k(\g_\k)$ of rational functions
on $\g_\k$ is purely transcendental over the field of invariants
$\k(\g_\k)^{G_\k}$.

Incidently, it was recently proved by Jean-Lois
Colliot-Th{\'e}l{\`e}ne, Boris Kunyavski$\breve{\i}$, Vladimir Popov
and Zinovy Reichstein that if the function field ${\mathbb K}(\g)$
is purely transcendental over the field of invariants ${\mathbb
K}(\g)^\g$, then $\g$ is of type ${\rm A}_n$, ${\rm C}_n$ or ${\rm
G}_2$; see [\cite{CT}, Thm.~0.2(b)]. In Section~4, we prove a
modular version of this result valid for $p\gg 0$. As a consequence,
we obtain the following:
\begin{theorem}
Let $\g$ be a finite dimensional simple Lie algebra over an
algebraically closed field $\mathbb K$ of characteristic $0$. If
$\mathcal{D}(\g)\,\cong\, \mathcal{D}_{r,s}(\mathbb{K})$ for some
$r,s$, then $\g$ is of type ${\rm A}_n$, ${\rm C}_n$ or ${\rm G}_2$.
\end{theorem}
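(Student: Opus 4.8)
The plan is to transport a hypothetical Gelfand--Kirillov isomorphism for $\g$ down to characteristic $p$ and show it cannot survive there, by chaining three ingredients. Assume $\mathcal{D}(\g)\cong\mathcal{D}_{r,s}(\mathbb{K})$; comparing indices and the transcendence degree of the centre forces $s=l:=\mathrm{rk}\,\g$ and $2r=\dim\g-l$. Then: (i) a reduction-mod-$p$ theorem shows this forces $\mathcal{D}(\g_\k)\cong\mathcal{D}_{r,s}(\k)$ for all $p\gg0$; (ii) Tange's structure theory for the centre $Z(\g_\k)$ shows such an isomorphism forces the field $\k(\g_\k)$ to be purely transcendental over $\k(\g_\k)^{G_\k}$; (iii) a modular analogue of the Colliot-Th{\'e}l{\`e}ne--Kunyavski$\breve{\i}$--Popov--Reichstein theorem shows that for $p\gg0$ the latter can hold only when $\g$ has type ${\rm A}_n$, ${\rm C}_n$ or ${\rm G}_2$. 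Choosing one prime large enough for all of (i)--(iii) then finishes the proof.

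For (i) I would spread the isomorphism out over $\Z$. A Chevalley basis of $\g_\Z$ generates the order $U(\g_\Z)\subset U(\g)$, and a Gelfand--Kirillov isomorphism $\Phi$ is determined by the images of these finitely many generators and of the standard generators $u_i,v_i,y_j$ of $\mathbf{A}_{r,s}(\mathbb{K})$, together with finitely many elements certifying that $\Phi$ and $\Phi^{-1}$ are defined and mutually inverse; all of this data lies in some finitely generated $\Z$-subalgebra $R\subset\mathbb{K}$. After inverting finitely many elements of $R$ one obtains an isomorphism of suitable Ore localisations of $U(\g_R)$ and $\mathbf{A}_{r,s}(R)$ over $R$; choosing a maximal ideal of $R$ of large residue characteristic (possible for all but finitely many $p$, since $R$ is a finitely generated $\Z$-domain containing $\Z$) and base changing to $\k$ yields $\mathcal{D}(\g_\k)\cong\mathcal{D}_{r,s}(\k)$. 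The delicate point is that forming fraction fields does not commute with base change, so one must carry enough of the localised orders, on both sides and in both directions, to guarantee that the specialised map is still an isomorphism of full fraction fields.

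For (ii), $\mathcal{D}(\g_\k)$ is finite-dimensional over its centre $\Q(\g_\k)=\mathrm{Frac}\,Z(\g_\k)$, while $\mathcal{D}_{r,s}(\k)$ is finite-dimensional over the fraction field of $\k[u_1^p,\dots,u_r^p,v_1^p,\dots,v_r^p,y_1,\dots,y_s]$, which is purely transcendental over $\k$; so (i) makes $\Q(\g_\k)$ purely transcendental over $\k$. Now one applies Tange's results: for $p\gg0$, $Z(\g_\k)$ is a free module over the $p$-centre $Z_p(\g_\k)$, the latter being, up to a (harmless, $\k$ being perfect) Frobenius twist and $G_\k$-equivariantly, the symmetric algebra $\mathrm{Sym}(\g_\k)$, whose fraction field is $\k(\g_\k)$, while the Harish-Chandra homomorphism identifies the complementary part of $Z(\g_\k)$ with $\k[\h]^W\cong\k[\g_\k]^{G_\k}$; Tange's resulting presentation of $\Q(\g_\k)$ in terms of $\k(\g_\k)$ and $\k(\g_\k)^{G_\k}$ then converts pure transcendence of $\Q(\g_\k)$ over $\k$ into pure transcendence of $\k(\g_\k)$ over $\k(\g_\k)^{G_\k}$. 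This is the substance of Section~3.

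Step (iii) is the principal new content and, I expect, the main obstacle. The idea is to reproduce, for $p\gg0$, the chain of birational reductions of Colliot-Th{\'e}l{\`e}ne et al. For $p$ large, $\k[\g_\k]^{G_\k}$ is a polynomial ring and the Chevalley restriction map to $\k[\h]^W$ is an isomorphism, so $\k(\g_\k)^{G_\k}=\k(\h)^W$ is rational over $\k$; a generic element of $\g_\k$ is regular semisimple with centraliser a maximal torus $T_\k$ (centralisers of semisimple elements in a simply connected group are connected, by Steinberg); and via the Grothendieck--Springer picture $\k(\g_\k)$ is birational to $\k(G_\k/T_\k\times\h)^W$, with $W=N_{G_\k}(T_\k)/T_\k$ acting freely on $G_\k/T_\k$ by right translation. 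By the no-name lemma $\k(\g_\k)$ is rational of degree $l$ over $\k(G_\k/T_\k)^W=\k(G_\k/N_{G_\k}(T_\k))$, so pure transcendence of $\k(\g_\k)$ over $\k(\g_\k)^{G_\k}$ forces $\k(G_\k/N_{G_\k}(T_\k))$ to be stably rational over $\k$. Finally, $G_\k/N_{G_\k}(T_\k)$ is the variety of maximal tori, carrying the ``generic maximal torus'' whose character lattice is the weight lattice of the root datum with its $W$-action; stable rationality of this variety is governed by a flasque-resolution invariant of that $W$-lattice, which depends only on the root system and which Colliot-Th{\'e}l{\`e}ne et al. showed to be non-trivial in types ${\rm B}_n$ ($n\ge3$), ${\rm D}_n$ ($n\ge4$), ${\rm E}_6$, ${\rm E}_7$, ${\rm E}_8$, ${\rm F}_4$. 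Hence $\g$ is of type ${\rm A}_n$, ${\rm C}_n$ or ${\rm G}_2$. The points requiring genuine care are the uniformity in $p$ of the reduction in (i) and the verification that every geometric step in (iii)---separability of the relevant orbit maps, validity of Chevalley restriction, identification of the generic torus---survives for all sufficiently large $p$; a tempting shortcut for (iii), deducing characteristic-zero rationality of $\mathbb{K}(\g)$ over $\mathbb{K}(\g)^\g$ from its characteristic-$p$ counterpart by spreading out, is itself problematic because rationality is not a constructible property in families.
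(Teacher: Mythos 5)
Your three-step architecture (reduction mod $p$, Tange's theorem on the Zassenhaus variety, modular CTKPR) is exactly the paper's, but two of the steps contain genuine gaps. In step (ii) you extract from $\mathcal{D}(\g_\k)\cong\mathcal{D}_{r,s}(\k)$ only that $\Q(\g_\k)$ is purely transcendental over $\k$, and claim that Tange's identification then converts this into purity of $\k(\g_\k)$ over $\k(\g_\k)^{G_\k}$. That implication proves too much: Tange established pure transcendence of $\Q(\g_\k)$ over $\k$ \emph{unconditionally} (this was Alev's question), so if it sufficed, purity of $\k(\g_\k)/\k(\g_\k)^{G_\k}$ would hold in every type and contradict your own step (iii). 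What is actually needed is the relative statement that $\Q(\g_\k)$ is purely transcendental over its invariant subfield $\k(\overline{\psi}_1,\ldots,\overline{\psi}_l)={\rm Frac}\,Z(\g_\k)^{G_\k}$, with the $2N$ generators being the central elements $w_i^p$. The paper secures this by arranging, inside the proof of the reduction theorem, that the modular $w_i$ and the polynomials $P_k,Q_k$ have coefficients in $U(\g_\k)^{G_\k}$, and then uses the $G_\k$-equivariance of Tange's isomorphism $\mathcal{Z}_{\rm rs}\cong(\g_\k^*)'_{\rm rs}$ to carry the invariant subfield onto $\k(\g_\k^*)^{G_\k}$; an abstract isomorphism with $\mathcal{D}_{r,s}(\k)$ does not by itself tell you where $\k(y_1,\ldots,y_s)$ lands.

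In step (iii) your no-name reduction lands on a vacuous obstruction. From $\k(\g_\k)\cong\k(G_\k/N_{G_\k}(T_\k))(t_1,\ldots,t_l)$ and the rationality of $\k(\g_\k)^{G_\k}$ over $\k$, purity only gives that $G_\k/N_{G_\k}(T_\k)$ is stably rational over $\k$ --- which holds for \emph{every} type, since the variety of maximal tori is $\k$-rational by Grothendieck's theorem; no constraint on $\Phi$ results. The obstruction lives in the relative situation: purity forces the generic fibre of the adjoint quotient, i.e.\ the homogeneous space $G_{p}/T_p^{\,\rm gen}$, to be rational over the \emph{non-closed} field $\widetilde{K}_p=K_p(\mathcal{T}_p)$, and it is the Galois action of $W$ on ${\rm Pic}$ of a smooth compactification of this homogeneous space, identified with $X(T_p^{\rm gen})=P(\Phi)$, that yields the permutation-lattice resolution and hence the restriction on types. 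Two further points you would then have to supply: a smooth projective model of $G_{p}/T_p^{\,\rm gen}$ over $\widetilde{K}_p$ (resolution of singularities is unavailable in characteristic $p$; the paper spreads out a characteristic-zero Hironaka resolution and invokes Zariski's Main Theorem), and the fact that $T_p^{\,\rm gen}$ splits over a Galois extension with group exactly $W$ acting standardly on the weight lattice, which in characteristic $p$ requires the separability verifications you only allude to.
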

This shows that the original Gelfand--Kirillov conjecture does not
hold for simple Lie algebras of types ${\rm B}_n$, $n\ge 3$, ${\rm
D}_n$, $n\ge 4$, ${\rm E}_6$, ${\rm E}_7$, ${\rm E}_8$ and ${\rm
F}_4$. It seems plausible to the author that the conjecture {\it
does} hold for simple Lie algebras of type $\rm C$. The supporting
evidence for that comes from [\cite{CT}, Thm.~0.2(a)] which says
that in type $\rm C$ the field $\mathbb{K}(\g)$ is purely
transcendental over its subfield ${\mathbb K}(\g)^{\g}$. Some of the
results obtained in [\cite{Ng}] might be useful for proving the
conjecture in type $\rm C$.

\medskip

\noindent{\bf Acknowledgement.} Part of this work was done during my
stay at the Isaac Newton Institute (Cambridge) in May--June 2009. I
would like to thank the Institute for warm hospitality and excellent
working conditions. The results of this paper were announced in my
talk at the final conference of the EPSRC Programme ``Algebraic Lie
Theory'' held at the INI in June 2009. I would like to thank Boris
Kunyavski$\breve{\i}$, Vladimir Popov, Andrei Rapinchuk and Rudolf
Tange for some very helpful email correspondence.
\section{\bf The Gelfand--Kirillov conjecture and its modular analogues}\label{sec1}
\subsection{} In this paper we treat the Gelfand--Kirillov conjecture as
a noncommutative version of a purity problem for field extensions.
In order to reduce it to a classical purity problem, as studied in
birational invariant theory, we seek a passage to finite
characteristics. As a first step, we make a transit from $\g$ to
$\g_\k$ ensuring in advance that the validity of the
Gelfand-Kirillov conjecture for $\g$ implies that for $\g_\k$. Since
$U(\g_\k)$ is a finite module over its center $Z(\g_\k)$, the field
of fractions $\mathcal{D}(\g_\k)$ is a finite dimensional central
division algebra over the fraction field $\Q(\g_\k)$ of $Z(\g_\k)$.
This enables us to  apply recent results of Tange [\cite{T}] on the
rationality of $\Q(\g_\k)$ to reduce the original problem about the
structure of $\mathcal{D}(\g)$ to the purity problem for the field
extension $\k(\g_\k)/\k(\g_\k)^{G_\k}$.

\subsection{} In this subsection we prove our reduction theorem:
\begin{theorem}\label{GK mod p}
If the Gelfand--Kirillov conjecture holds for $\g$, then it holds
for $\g_\k$ for all $p\gg 0$, where $\k$ is the algebraic closure of
${\mathbb F}_p$.
\end{theorem}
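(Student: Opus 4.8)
The plan is to spread out the isomorphism $\mathcal{D}(\g)\cong\mathcal{D}_{r,s}(\mathbb{K})$ over a finitely generated subring of $\mathbb{K}$ and then specialise at a maximal ideal of residue characteristic $p$. First I would observe that $r$ and $s$ are forced: as recalled in the introduction, $s=\mathrm{index}\,\g$ and $r=\tfrac12(\dim\g-\mathrm{index}\,\g)$, and these numbers are the same for $\g$ and for $\g_\k$ once $p$ is large, because they are determined by the root datum. So it suffices to exhibit, for $p\gg 0$, an isomorphism $\mathcal{D}(\g_\k)\cong\mathcal{D}_{r,s}(\k)$ with the same $(r,s)$.

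Next I would set up the descent. Fix Chevalley generators of $\g_\Z$; then $U(\g_\k)=U(\g_\Z)\otimes_\Z\k$ and likewise $\mathbf{A}_{r,s}(\k)=\mathbf{A}_{r,s}(\Z)\otimes_\Z\k$, where $\mathbf{A}_{r,s}(\Z)$ is the obvious $\Z$-form. An isomorphism $\varphi\colon\mathcal{D}(\g)\iso\mathcal{D}_{r,s}(\mathbb{K})$ sends the finitely many generators of $U(\g_\Z)$ to elements of $\mathcal{D}_{r,s}(\mathbb{K})$, each a fraction of elements of $\mathbf{A}_{r,s}(\mathbb{K})$; conversely $\varphi^{-1}$ sends the $2r+s$ generators $u_i,v_i,y_j$ to fractions of elements of $U(\g)$. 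Let $R\subset\mathbb{K}$ be the subring generated by $\Z$ together with all the (finitely many) structure constants occurring in the numerators and denominators of these images and in the relations that check $\varphi\circ\varphi^{-1}=\mathrm{id}=\varphi^{-1}\circ\varphi$ on generators. Then $R$ is a finitely generated $\Z$-algebra, hence (after inverting one further element to clear the finitely many denominators that appear, and to make the relevant leading/denominator terms units) a domain that is finitely generated over $\Z$. Over $R$ one gets a genuine map $\varphi_R\colon U(\g_R)\to\mathbf{A}_{r,s}(R)[S^{-1}]$ and a map the other way, which become mutually inverse isomorphisms of the respective Ore localisations, all of this being a matter of checking finitely many identities that hold by construction.

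Finally I would specialise. The ring $R$, being a finitely generated $\Z$-algebra that is a domain of characteristic $0$, has the property that for all but finitely many primes $p$ there is a ring homomorphism $R\to\k=\overline{\mathbb F}_p$ (choose a maximal ideal $\m\subset R$ with $R/\m$ finite of characteristic $p$, which exists for all large $p$ by the Nullstellensatz over $\Z$, then embed $R/\m\into\k$); and one may arrange that the finitely many ``bad'' elements that were inverted in $R$ remain nonzero in $\k$, again excluding finitely many $p$. Tensoring $\varphi_R$ and its inverse with $\k$ over $R$ gives mutually inverse maps between the appropriate localisations of $U(\g_\k)$ and $\mathbf{A}_{r,s}(\k)$; since $U(\g_\k)$ is an Ore domain with fraction field $\mathcal{D}(\g_\k)$ and $\mathbf{A}_{r,s}(\k)$ likewise has fraction field $\mathcal{D}_{r,s}(\k)$, and the localised maps are compatible with the embeddings into these division rings, they extend uniquely to an isomorphism $\mathcal{D}(\g_\k)\iso\mathcal{D}_{r,s}(\k)$. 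This proves the modular Gelfand--Kirillov conjecture for $\g_\k$ for all $p\gg 0$.

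The main obstacle, and the point that needs care rather than cleverness, is the descent step: making sure that \emph{finitely many} elements of $R$ suffice to witness that $\varphi_R$ is an isomorphism of localised rings, i.e. that the Ore localisations behave well under base change from $R$ to $\k$. The subtlety is that an Ore localisation is not finitely presented in an obvious way, so one must phrase the statement ``$\varphi$ is an isomorphism'' as a finite list of equations among images of generators (using that both $U$ and $\mathbf{A}_{r,s}$ are generated by finitely many elements with finitely many defining relations) before descending, and then check that each such equation, being an identity in a localisation, already holds after inverting a single common denominator that one folds into $R$. Once this bookkeeping is done correctly, stability of the nonvanishing of finitely many elements of $R$ modulo $p$ for $p\gg 0$ is standard, and the rest is formal.
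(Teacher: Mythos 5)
Your proposal is correct and follows essentially the same strategy as the paper: encode the isomorphism $\mathcal{D}(\g)\cong\mathcal{D}_{r,s}(\mathbb{K})$ by finitely many elements and polynomial identities obtained by clearing Ore denominators, spread these out over a finitely generated $\Z$-algebra, and specialise to $\k=\overline{\mathbb F}_p$ for $p\gg 0$. The denominator-clearing bookkeeping you flag as the main obstacle is exactly what occupies part (A) of the paper's proof (carried out by recursive use of the Ore condition), and the reconstruction of the isomorphism over $\k$ from the specialised data is handled there via a lemma of Bois.
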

\begin{proof}
(A) Choose a Chevalley basis ${\mathcal B}=\{x_1,\ldots, x_n\}$ of
$\g_\Z$ and denote by $U_d(\g)$ the $d$-th component of the
canonical filtration of $U(\g)$. If the field of fractions
$\mathcal{D}(\g)$ is isomorphic to {\sl Frac}\,$\big({\bf
A}_N\otimes Z(\g)\big)$, where $N$ is the number of positive roots
of $\g$, then there exist $w_1,\ldots, w_{2N}\in \mathcal{D}(\g)$
such that
\begin{eqnarray}
[w_i,w_j]\,=\,[w_{N+i},w_{N+j}]&=&0\,\,\, \qquad \quad(1\le i,j\le N);\label{1}\\
\,[w_i,w_{N+j}]&=&\delta_{i,j}\,\, \qquad \,\, (1\le i,j\le N);\label{2}\\
Q_k\cdot x_k&=&P_k,\qquad \, \,\,(1\le k\le n)\label{3}
\end{eqnarray}
for some {\it nonzero} polynomials $P_i,Q_i$ in $w_1,\ldots, w_{2N}$
with coefficients in $Z(\g)$ (here (\ref{3}) follows from the fact
that the monomials $w_1^{a_1}w_2^{a_2}\cdots w_{2N}^{a_{2N}}$ with
$a_i\in\Z_+$ form a basis of the $\k$-subalgebra of ${\mathcal
D}(\g)$ generated by $w_1,\ldots,w_{2N}$). Since $w_i=v_i^{-1}u_i$
for some {\it nonzero} elements $u_i,v_i\in U_{d(i)}(\g)$, we can
rewrite (\ref{1}) and (\ref{2}) as follows
\begin{eqnarray}
v_i^{-1}u_i\cdot v_j^{-1}u_j&=&v_j^{-1}u_j\cdot v_i^{-1}u_i;\label{1'}\\
v_{N+i}^{-1}u_{N+i}\cdot v_{N+j}^{-1}u_{N+j}&=&v_{N+j}^{-1}u_{N+j}\cdot v_{N+i}^{-1}u_{N+i};\label{2'}\\
v_{i}^{-1}u_{i}\cdot v_{N+j}^{-1}u_{N+j}-v_{N+j}^{-1}u_{N+j}\cdot v_{i}^{-1}u_{i}&=
&\delta_{i,j}\qquad\qquad \quad\quad \ (1\le i,j\le N). \label{3'}
\end{eqnarray}
As the nonzero elements of $U(\g)$ form an Ore set, there are {\it
nonzero} elements $v_{i,j}, u_{i,j}\in U_{d(i,j)}(\g)$ such that
\begin{equation}\label{uv}
v_{i,j}u_i=u_{i,j}v_j\qquad\qquad\quad(1\le i,j\le 2N).
\end{equation}
Thus we can rewrite (\ref{1'}), (\ref{2'}) and (\ref{3'})
in the form
\begin{eqnarray}
\quad v_i^{-1}v_{i,j}^{-1}\cdot
u_{i,j}u_j&=&v_j^{-1}v_{j,i}^{-1}\cdot u_{j,i}u_i
\qquad(1\le i,j\le N\mbox{ or } N\le i,j\le 2N)\label{1''}\\
\quad v_{i}^{-1}v_{i,N+j}^{-1}\cdot u_{i,N+j}u_{N+j}&=&\delta_{ij}+v_{N+j}^{-1}v_{N+j,i}^{-1}\cdot
u_{N+j,i}u_{N+i}\qquad(1\le i,j\le N).\label{2''}
\end{eqnarray}
By the same reasoning, there exist {\it nonzero} elements $a_{i,j},
b_{i,j}\in U_{d(i,j)}(\g)$ such that
\begin{equation}\label{10}
a_{i,j}v_{i,j}v_i=b_{i,j}v_{j,i}v_j\qquad\qquad\quad(1\le i,j\le
2N).
\end{equation}
Since $$v_{i,j}v_i(v_{j,i}v_j)^{-1}\,=\,a_{i,j}^{-1}b_{i,j},$$ it is
straightforward to see that (\ref{1''}) and (\ref{2''}) can be
rewritten as
\begin{eqnarray}
\quad a_{i,j}u_{i,j}u_j&=&b_{i,j}u_{j,i}u_i
\qquad(1\le i,j\le N\mbox{ or } N\le i,j\le 2N)\label{1'''}\\
\qquad a_{i,N+j}u_{i,N+j}u_{N+j}&=&\delta_{ij}a_{i,N+j}v_{i,N+j}v_i+
b_{i,N+j} u_{N+j,i}u_{i}\qquad(1\le i,j\le N).\label{2'''}
\end{eqnarray}

For an $m$-tuple ${\bf i}=(i(1),i(2),\ldots,i(m))$ with $1\le
i(1)\le i(2)\le \cdots\le i(m)\le 2N$ and $m\ge 3$ we select
(recursively) {\it nonzero} elements $u_{i(1),\ldots, i(k)},
v_{i(1),\ldots,i(k)}\in U_{d(\bf i)}(\g)$, where $3\le k\le m$, such
that
\begin{equation}\label{13}
v_{i(1),\ldots, i(k)}u_{i(1),\ldots,i(k-1)}u_{i(k-1)}=u_{i(1),\ldots, i(k)}v_{i(k)}.
\end{equation}
Write $w^{\mathbf i}:=w_{i(1)}\cdot w_{i(2)}\cdot\ldots\cdot w_{i(m)}
=\prod_{k=1}^m\,v_{i(k)}^{-1}u_{i(k)}.$ Then
\begin{eqnarray*}
w^{\mathbf i}&=&v_{i(1)}^{-1}u_{i(1)}\cdot
v_{i(2)}^{-1}u_{i(2)}\cdot{\prod}_{k=3}^m\,v_{i(k)}^{-1}u_{i(k)}\\
&=&v_{i(1)}^{-1}v_{i(1),i(2)}^{-1}u_{i(1),i(2)}u_{i(2)}\cdot v_{i(3)}^{-1}u_{i(3)}\cdot
\textstyle{\prod}_{k=4}^m\,v_{i(k)}^{-1}u_{i(k)}\\
&=&v_{i(1)}^{-1}v_{i(1),i(2)}^{-1}v_{i(1),i(2),i(3)}^{-1}
u_{i(1),i(2),i(3)}u_{i(3)}\cdot
\textstyle{\prod}_{k=4}^m\,v_{i(k)}^{-1}u_{i(k)}\\
&=&\cdots\,=\,\Big({\prod}_{k=1}^m\,v_{i(1),\ldots, i(m-k+1)}
\Big)^{-1}\cdot
u_{i(1),\ldots, i(m)}u_{i(m)}.
\end{eqnarray*}
We now put $v_{\bf i}:={\prod}_{k=1}^m\,v_{i(1),\ldots, i(m-k+1)}$ and $u_{\bf i}:=u_{i(1),\ldots, i(m)}u_{i(m)}$.

Let $\{{\bf i}(1),\ldots,{\bf i}(r)\}$ be the set of all tuples as above with
$\sum_{\ell=1}^mi(\ell)\le M$,
where $M=\max\{\deg P_i,\,\deg Q_i\,|\,\,1\le i\le n\}$. Clearly,
$P_k=\sum_{j=1}^r\,\lambda_{j,k}w^{{\bf i}(j)}$ and
$Q_k=\sum_{j=1}^r\,\mu_{j,k}w^{{\bf i}(j)}$ for some
$\lambda_{j,k},\mu_{j,k}\in Z(\g)$, where $1\le k\le n$.
The above discussion then shows that $P_k=\sum_{j=1}^r\,\lambda_{j,k}\,v_{{\bf i}(j)}^{-1}
u_{{\bf i}(j)}$ and $Q_k=\sum_{i=1}^r\,
\mu_{j,k}\,v_{{\bf i}(j)}^{-1}u_{{\bf i}(j)}$.

It is well known that $Z(\g)$, the centre of $U(\g)$, is freely generated over $\mathbb K$ by
$l={\rm rk}\, \g$
elements $\psi_1,\ldots, \psi_l\in
U(\g_\Z)$. Moreover, for
$p\gg 0$ the invariant algebra $U(\g_\k)^{G_\k}\subset Z(\g_\k)$ with respect to the adjoint action of
$G_\k$ is freely generated over $\k$ by $\overline{\psi}_1,\ldots, \overline{\psi}_l$, the images of
$\psi_1,\ldots,\psi_l$ in $U(\g_\k)=U(\g_\Z)\otimes_\Z\k$; see [\cite{Jan97}, 9.6] for instance.

We can write $$\lambda_{j,k}=\sum_{\bf a}\,\lambda_{j,k}(a_1,\ldots,a_l)\psi_1^{a_1}\cdots\psi_l^{a_l}
\ \mbox{ and }\
\mu_{j,k}=\sum_{\bf a}\mu_{j,k}(a_1,\ldots,a_l)\psi_1^{a_1}\cdots\psi_l^{a_l}$$ for some scalars
$\lambda_{j,k}(a_1,\ldots,a_l),\,\mu_{j,k}(a_1,\ldots,a_l)
\in\mathbb K$, where the summation runs over finitely many $l$-tuples ${\bf a}=(a_1,\ldots,a_l)
\in\Z_+^l$.

There exist {\it nonzero} $c_{{\bf i}(j);\,k},\,d_{{\bf
i}(j);\,k}\in U_{d({\bf i}(j),k)}(\g)$ such that
\begin{equation}\label{14}
u_{{\bf i}(j)}x_k d_{{\bf i}(j);\,k}\,=\,v_{{\bf i}(j)}c_{{\bf
i}(j);\,k}\qquad \quad (1\le j\le r,\,1\le k\le n).
\end{equation}

Since $P_k=Q_kx_k$, we have that
\begin{equation}\label{15}
\sum_{j=1}^r\,\lambda_{j,k}\,v_{{\bf i}(j)}^{-1} u_{{\bf
i}(j)}\,=\,\sum_{i=1}^r\, \mu_{j,k}\,c_{{\bf i}(j);\,k}d_{{\bf
i}(j);\,k}^{-1}\qquad(1\le k\le n).
\end{equation}
Set $v_{{\bf i}(j)}(0):=v_{{\bf i}(j)}$, $u_{{\bf i}(j)}(0)=u_{{\bf
i}(j)}$, $c_{{\bf i}(j);\,k}(0):=c_{{\bf i}(j);\,k}$, $d_{{\bf
i}(j);k}(0):=d_{{\bf i}(j);k}$. For each pair $(j,s)$ of positive
integers satisfying $r\ge j>s>0$ we select (recursively) {\it
nonzero} elements $v_{{\bf i}(j)}(s),\, u_{{\bf i}(j)}(s),\, c_{{\bf
i}(j);\,k}(s), \,d_{{\bf i}(j);\,k}(s)\in U_{d({\bf i}(j),k,s)}(\g)$
such that
\begin{eqnarray}
v_{{\bf i}(j)}(s)v_{{\bf i}(s)}(s-1)&=&u_{{\bf i}(j)}(s)v_{{\bf i}(j)}(s-1)\label{16}\\
d_{{\bf i}(j);\,k}(s-1)c_{{\bf i}(j);\,k}(s)&=&d_{{\bf
i}(s);\,k}(s-1)d_{{\bf i}(j);\,k}(s)\label{17}.
\end{eqnarray}
Multiplying both sides of (\ref{15}) by $v_{{\bf i}(1)}$ on the left
and by $d_{{\bf i}(1),k}$ on the right we obtain (after applying
(\ref{16}) and (\ref{17}) with $s=1$) that
\begin{eqnarray*}
0&=&\lambda_{1,k}u_{{\bf i}(1)}d_{{\bf i}(1);\,k}
-\mu_{1,k}v_{{\bf i}(1)}c_{{\bf i}(1);\,k}\\
&+&\sum_{j=2}^r\big(\lambda_{j,k}v_{{\bf i}(1)}v_{{\bf i}(j)}^{-1}
u_{{\bf i}(j)}d_{{\bf i}(1);\,k}-\mu_{j,k}v_{{\bf i}(1)}c_{{\bf
i}(j);\,k}
d_{{\bf i}(j);k}^{-1}d_{{\bf i}(1);\,k}\big)\\
&=&\lambda_{1,k}u_{{\bf i}(1)}d_{{\bf i}(1);\,k}
-\mu_{1,k}v_{{\bf i}(1)}c_{{\bf i}(1);\,k}\\
&+&\sum_{j=2}^r\big(\lambda_{j,k}v_{{\bf i}(j)}(1)^{-1}u_{{\bf
i}(j)}(1) u_{{\bf i}(j)}d_{{\bf i}(1);\,k}-\mu_{j,k}v_{{\bf
i}(1)}c_{{\bf i}(j);k} c_{{\bf i}(j);\,k}(1)d_{{\bf
i}(1);\,k}(1)^{-1}\big).
\end{eqnarray*}
Multiplying both sides of this equality by $v_{{\bf i}(2)}(1)$ on
the left and by $d_{{\bf i}(2);\,k}(1)$ on the right and applying
(\ref{16}) and (\ref{17}) with $s=2$ we get
\begin{eqnarray*}
0&=&\lambda_{1,k}v_{{\bf i}(2)}(1)u_{{\bf i}(1)}d_{{\bf
i}(1);\,k}d_{{\bf i}(2);k}(1)
-\mu_{1,k}v_{{\bf i}(2)}(1)v_{{\bf i}(1)}c_{{\bf i}(1);\,k}d_{{\bf i}(2);\,k}(1)\\
&+&\lambda_{2,k}u_{{\bf i}(2)}(1)u_{{\bf i}(1)}d_{{\bf
i}(1);\,k}d_{{\bf i}(2);\,k}(1)
-\mu_{2,k}v_{{\bf i}(2)}(1)v_{{\bf i}(1)}c_{{\bf i}(2);\,k}c_{{\bf i}(2);\,k}(1)\\
&+&\sum_{j=3}^r\lambda_{j,k}v_{{\bf i}(j)}(2)^{-1}u_{{\bf i}(j)}(2)u_{{\bf i}(j)}(1)
u_{{\bf i}(j)}d_{{\bf i}(1);\,k}d_{{\bf i}(2);\,k}(1)\\
&-&\sum_{j=3}^r\mu_{j,k}v_{{\bf i}(2)}(1)v_{{\bf i}(1)}c_{{\bf
i}(j);\,k} c_{{\bf i}(j);k}(1)c_{{\bf i}(j);\,k}(2)d_{{\bf
i}(2);\,k}(2)^{-1}.
\end{eqnarray*}

Repeating this process $r$ times we get rid of all denominators and
arrive at the equality
\begin{eqnarray}
&&\Big(\sum_{j=1}^r\lambda_{j,k}\prod_{\ell=1}^{r-j}v_{{\bf
i}(r-\ell+1)}(r-\ell)\prod_{\ell=1}^{j} u_{{\bf
j}(j-\ell+1)}(j-\ell)\Big)\prod_{\ell=1}^{r}d_{{\bf i}(\ell);k}(\ell-1)
\label{18}\,=\\
&=&\Big(\prod_{\ell=1}^{r} v_{{\bf
i}(r-\ell+1)}(r-\ell)\Big)\Big(\sum_{j=1}^r\mu_{j,k}\prod_{\ell=1}^j
c_{{\bf i}(\ell);k}(\ell-1)\prod_{\ell=j+1}^{r}d_{{\bf
i}(\ell);k}(\ell-1)\Big)\nonumber
\end{eqnarray}
(at the $\ell$-th step of the process we multiply the the preceding
equality by $v_{{\bf i}(\ell)}(\ell-1)$ on the left and by $d_{{\bf
i}(\ell);\,k}(\ell-1)$ on the right and then apply (\ref{16}) and
(\ref{17}) with $s=\ell$).

\medskip

\noindent
(B) In part~(A) we have introduced certain {\it nonzero}
elements
\begin{equation}\label{ele}
u_i,v_i, u_{i,j}, v_{i,j}, a_{i,j}, b_{i,j}, u_{i(1),\ldots,i(s)},
v_{i(1),\ldots,i(s)}, u_{{\bf i}(j)}(\ell), v_{{\bf
i}(j)}(\ell),c_{{\bf i}(j);\,k}(\ell), d_{{\bf i}(j);\,k}(\ell)
\end{equation}
in $U(\g)$ with $i,j,k, s,{\bf i}(j), \ell$ ranging over finite sets
of indices. These elements satisfy algebraic equations  (\ref{uv}),
(\ref{10}), (\ref{1'''}), (\ref{2'''}), (\ref{13}), (\ref{14}),
(\ref{16}) and (\ref{17}). We have also introduced, for $1\le k\le
r$, two {\it nonzero} finite collections of scalars
$\{\lambda_{j,k}(a_1,\ldots,a_l)\}$ and
$\{\mu_{j,k}(a_1,\ldots,a_l)\}$ in $\mathbb K$ linked with the
elements (\ref{ele}) by equation~(\ref{18}).

The procedure described in part~(A) shows that the above data can be
parametrised by the points of a locally closed subset of an affine
space ${\mathbb A}^D_{\mathbb K}$, where $D$ is sufficiently large.
More precisely, there exist finite sets $\mathcal F$ and $\mathcal
G$ of polynomials in $D$ variables with coefficients in $\mathbb Z$
such that a point $x\in{\mathbb A}^D_{\mathbb K}$ lies in our
locally closed set if and only if and $f(x)=0$ for all $f\in
\mathcal F$ and $g(x)\ne 0$ for some $g\in \mathcal G$. Let
$\widetilde{X}$ denote the zero locus of the set $\mathcal F$ in
${\mathbb A}^D_{\mathbb K}$.

Suppose the Gelfand--Kirillov conjecture holds for $\g$. Then there
exists $x\in \widetilde{X}(\mathbb K)$ such that $g(x)\ne 0$ for
some $g\in \mathcal G$. We set $X:=\{x\in\widetilde{X}\,|\,\,g(x)\ne
0\}$, a nonempty principal open subset of $\widetilde{X}$. As $X$ is
an affine variety defined over the algebraic closure
$\overline{\mathbb Q}$ of the field of rationals, we have that
$X(\overline{\mathbb Q})\ne\emptyset$. Hence there is a finitely
generated $\Z$-subalgebra $A$ of $\overline{\mathbb Q}$ for which
$X(A)\ne\emptyset$. There are an algebraic number field $K$ and a
nonzero $d\in\Z$ such that $A\subset {\mathcal O}_K[d^{-1}]$, where
${\mathcal O}_K$ denotes the ring of algebraic integers of $K$.
Since the map ${\rm Spec}({\mathcal O}_K)\to{\rm Spec}(Z)$ induced
by inclusion $\Z\hookrightarrow{\mathcal O}_K$ is surjective, it
must be that $X(\k)\ne\emptyset$ for every prime $p\in\mathbb N$
with $p\nmid d$ (recall that $\k$ stands for the algebraic closure
of ${\mathbb F}_p$).

\medskip

\noindent (C) When $X(\k)\ne\emptyset$, we can find {\it nonzero}
elements $$u_i,v_i, u_{i,j}, v_{i,j}, a_{i,j}, b_{i,j},
u_{i(1),\ldots,i(s)}, v_{i(1),\ldots,i(s)}, u_{{\bf i}(j)}(\ell),
v_{{\bf i}(j)}(\ell),c_{{\bf i}(j);\,k}(\ell), d_{{\bf
i}(j);\,k}(\ell)\in U(\g_\k)$$ satisfying (\ref{uv}), (\ref{10}),
(\ref{1'''}), (\ref{2'''}), (\ref{13}), (\ref{14}), (\ref{16}),
(\ref{17}) and {\it nonzero} collections of scalars
$\{\lambda_{j,k}(a_1,\ldots,a_l)\}$ and
$\{\mu_{j,k}(a_1,\ldots,a_l)\}$ in $\k$ for which the modular
version of (\ref{18}) holds. As all steps of the procedure described
in part~(A) are reversible and the nonzero elements of $U(\g_\k)$
still form an Ore set, this enables us to find $w_1,\ldots,
w_{2N}\in$ {\sl Frac}$\,\,U(\g_\k)$ and {\it nonzero} polynomials
$P_1,\ldots, P_n$ and $Q_1,\ldots, Q_n$ in $w_1,\ldots, w_{2N}$ with
coefficients in the invariant algebra $U(\g_\k)^{G_\k}$ for which
the modular versions of (\ref{1}), (\ref{2}) and (\ref{3}) hold.
Since the images of $x_1,\ldots, x_n$ in $\g_\k$ generate {\sl
Frac}$\,\,U(\g_\k)$ as a skew-field, applying [\cite{Bois},
Lem.~1.2.3] shows that the Gelfand--Kirillov conjecture holds for
$\g_\k$ for all $p\gg 0$.
\end{proof}

\section{\bf The Gelfand--Kirillov conjecture and purity of field extensions}\label{sec2}
\subsection{}
In this section we investigate the modular situation under the
assumption that $p\gg 0$. We are going to apply recent results of
Rudolf Tange [\cite{T}] on the Zassenhaus variety of $\g_\k$ to show
that if the Gelfand--Kirillov conjecture folds for $\g_\k$, then the
field $\k(\g_\k^*)\,=\,${\sl Frac}$\,\,S(\g_\k)$ is purely
transcendental over its subfield $\k(\g_\k^*)^{G_\k}\,=\, ${\sl
Frac}$\,\,S(\g_\k)^{G_\k}$. To explain Tange's results in detail we
need a geometric description of the Zassenhaus variety of $\g_\k$.
We follow the exposition in [\cite{T}] very closely.

Recall that the {\it Zassenhaus variety} $\mathcal Z$ of $\g_\k$ is
defined as the maximal spectrum of the centre $Z(\g_\k)$ of
$U(\g_\k)$. The Lie algebra $\g_\k=\Lie(G_\k)$ carries a natural
$p$-th power map $x\mapsto x^{[p]}$ equivariant under the adjoint
action of $G_\k$. We denote by $Z_p(\g_\k)$ the $p$-centre of
$U(\g_\k)$; it is generated as a $\k$-algebra by all
$\eta(x):=x^p-x^{[p]}$ with $x\in\g_\k$. It follows easily from the
PBW theorem that $Z_p(\g_\k)$ is a polynomial algebra in
$\eta(x_1),\ldots,\eta(x_n)$ and $U(\g_\k)$ is a free
$Z_p(\g_\k)$-module of rank $p^n$. This implies that $Z(\g_\k)$ is
is a Noetherian domain of Krull dimension $n=\dim\,\g_\k$, thus
showing that $\mathcal Z$ is an irreducible $n$-dimensional affine
variety. By an old result of Zassenhaus [\cite{Zass}], the variety
$\mathcal Z$ is normal.

\subsection{} To ease notation we often identify the elements of
$\g_\Z$ with their images in $\g_\k= \g_\Z\otimes_\Z\k$. Recall that
$\mathcal{B}=\{x_1,\ldots, x_n\}$ is a Chevalley basis of $\g_\Z$.
Then there is a maximal torus of $T\subset G$ defined and split over
$\mathbb Q$, such that
$$\mathcal{B}\,=\,\{h_\alpha\,|\,\,\alpha\in\Pi\}\cup
\{e_\alpha\,|\,\,\alpha\in\Phi\},$$ where $\Phi$ is the root system
of $G$ with respect to $T$ and $\Pi$ is a basis of simple roots in
$\Phi$ (we adopt the standard convention that $h_\alpha=({\rm
d}\alpha^\vee)(1)$ where ${\rm d}\alpha^\vee$ is the differential at
$1$ of the coroot $\alpha^\vee\colon\,\k^\times\to T$, and
$e_\alpha$ is a generator of the $\Z$-module $\g_\Z\cap\g_\alpha$,
where $\g_\alpha$ is the $\alpha$-root space of $\g$ with respect to
$T$).

Set $\t:=\Lie(T)$ and denote by $T_\k$ the maximal torus of $G_\k$
obtained from $T$ by base change. Set $\t_\k:={\rm Lie}(T_\k)$, and
identify the dual space $\t_\k^*$ with the subspace of $\g_\k^*$
consisting of all linear functions $\chi$ on $\g_\k$ with
$\chi(e_\alpha)=0$ for all $\alpha\in\Phi$. We write $X(T_\k)$ for
the group of rational characters of $T_\k$ and denote by $W$ the
Weyl group $N_{G_\k}(T_\k)/Z_{G_\k}(T_\k)$. This group is generated
by reflections $s_\alpha$ with $\alpha\in\Phi$ and it acts naturally
on both $\t_\k$ and $\t_\k^*$.

Let $\Phi_+$ be the positive system of $\Phi$ containing $\Pi$ and let
$\rho=\frac{1}{2}\sum_{\alpha\in\Phi_+}\alpha$. Then ${\rm d}\rho$ is and ${\mathbb F}_p$-linear
combination of the ${\rm d}\alpha$'s with $\alpha\in\Pi$. To ease notation we write $\rho$ instead
of ${\rm d}\rho$. The {\it dot action} of $W$ on $\t_\k^*$ is defined as follows:
$$w_{\,\bullet\,}\chi\,=\,(\chi+\rho)-\rho\qquad\qquad(\forall \, w\in W,\ \chi\in\t_\k^*).$$
The induced dot action of $W$ on $S(\t_\k)$ has the property that ${s_\alpha}_{\,\bullet}\,t=
s_\alpha(t)-\alpha(t)$ for all $t\in\t_\k$ and $\alpha\in\Pi$.
There exists a unique algebra isomorphism $\gamma\colon\,S(\t_\k)\stackrel{\sim}{\too}S(\t_\k)$ such that
$\gamma(t)=t-\rho(t)$ for all $t\in\t_\k$. The dot action of $W$ is related to natural action of $W$
on $S(\t_\k)$ by the rule $w_{\,\bullet}=\gamma^{-1}\circ w\circ \gamma$ for all $w\in W$, which gives
rise to an isomorphism of invariant algebras $\gamma\colon\,S(\t_\k)^{W_\bullet}\stackrel{\sim}{\too}
S(\t_\k)^W$.

Put $\Phi_-=-\Phi_+$ and write $\n_\k^\pm$ for the $\k$-span of the $e_\alpha$'s with
$\alpha\in\Phi_{\pm}$. Then
$S(\g_\k)\,=\, S(\n_\k^-)\otimes_\k S(\t_\k)\otimes_\k S(\n_\k^+)$ and
$U(\g_\k)\,=\, U(\n_\k^-)\otimes_\k U(\t_\k)\otimes_\k U(\n_\k^+)$ as vector
spaces. Write $S_+(\g_\k)$ and $U_+(\g_\k)$ for the augmentation ideals of $S(\g_\k)$ and $U(\g_\k)$,
respectively, and
denote by $\Psi$ (resp., $\tilde{\Psi}$)
the linear map from
$S(\g_\k)$ onto $S(\t_\k)$ (resp., from
$U(\g_\k)$ onto $U(\t_\k)=S(\t_\k)$) taking
$u\otimes h\otimes v$ with $u\in S(\n_\k^-)$, $h\in S(\t_\k)$, $v\in S(\n_\k^+)$
(resp., $u\in U(\n_\k^-)$, $h\in U(\t_\k)$, $v\in U(\n_\k^+)$) to $u_0hv_0$, where $x_0$ is the scalar part of
$x\in S(\g_\k)$ (resp.,
$x\in U(\g_\k)$) with respect to the decomposition
$S(\g_\k)\,=\,\k1\oplus S_+(\g_\k)$
(resp., $U(\g_\k)\,=\,\k1\oplus U_+(\g_\k)$). Note that
the map $\Psi$ is an algebra epimorphism and so is
the restriction of $\tilde{\Psi}$ to $U(\g_\k)^{T_\k}$.

For $g\in G_\k$, $x\in\g_\k$, $\chi\in\g_\k^*$ we write $g\cdot x$ for
$({\rm Ad}\,g)(x)$ and $g\cdot\chi$ for $({\rm Ad}^*\,g)(\chi)$.
Since $p\gg 0$, the Chevalley restriction theorem holds for $\g_\k$, that is, the restriction of $\Psi$ to
$S(\g_\k)^{G_\k}$ induces an isomorphism of invariant algebras
\begin{equation}\label{Chev}
\Psi\colon\,S(\g_\k)^{G_\k}\stackrel{\sim}{\too} S(\t_\k)^W.
\end{equation}
As $p$ is large, we can argue as in the proof of Proposition~2.1 in [\cite{Ve}]
to deduce that
the restriction of $\tilde{\Psi}$ to $U(\g_\k)^{G_\k}\subset U(\g_\k)^{T_\k}$ induces
an algebra isomorphism
\begin{equation}\label{HC}
\tilde{\Psi}\colon\,U(\g_\k)^{G_\k}\stackrel{\sim}{\too} S(\t_\k)^{W_{\,\bullet}}
\end{equation}
(in fact, this holds under very mild assumptions on $p$; see [\cite{KW}, Lem.~5.4]).

\subsection{}\label{r} As the Killing form $\kappa$ of $\g_\k$ is
nondegenerate for almost primes $p$, we may identify the
$G_\k$-modules $\g_\k$ and $\g_\k^*$ by means of Killing isomorphism
$\kappa\colon\,\g_\k\ni x\mapsto \kappa(x,\,\cdot\,\,)\in\g_\k^*$.
If $\chi=\kappa(x,\,\cdot\,\,)\in\g_\k^*$ and $x=x_s+x_n$ is the
Jordan--Chevalley decomposition of $x$ in the restricted Lie algebra
$\g_\k$, then we define $\chi_s:=\kappa(x_s,\,\cdot\,\,)$ and
$\chi_n:=\kappa(x_n,\,\cdot\,\,)$. We call $\chi_s$ and $\chi_n$ the
{\it semisimple} and {\it nilpotent} part of $\chi$. Denote by
$(\t_\k)_{\rm reg}$ the set of all regular elements of $\t$ and put
$(\t_\k^*)_{\rm reg}:= \kappa\big((\t_\k)_{\rm reg}\big)$. The
elements of $(\t^*_\k)_{\rm reg}$ are called {\it regular linear
functions} on $\t$. Note that $\chi\in(\t_\k^*)_{\rm reg}$ if and
only if $\chi=\kappa(t,\,\cdot\,\,)$ for some $t\in\t_\k$ whose
centraliser in $\g_\k$ equals $\t_\k$. It follows that
$\chi\in(\t_\k^*)_{\rm reg}$ if and only if $\chi(h_\alpha)\ne 0$
for all $\alpha\in\Phi$. In view of [\cite{St}, Cor.~2.6], this
implies that $\chi\in(\t_\k^*)_{\rm reg}$ if and only if the
stabiliser of $\chi$ in $W$ is trivial. As a consequence,
$Z_{G_\k}(\chi)=T_\k$ for every $\chi\in(\t_\k^*)_{\rm reg}$.

Denote by $(\g_\k)_{\rm rs}$ the set of all regular semisimple
elements of $\g_\k$. Since every semisimple element of $\g_\k$ lies
in the Lie algebra of a maximal torus of $G_\k$ and all maximal tori
of $G_\k$ are conjugate, we have the equality $(\g_\k)_{\rm
rs}\,=\,G_\k\cdot(\t_\k)_{\rm reg}$; see [\cite{Hu}, \S~13] or
[\cite{BoSp}, 4.5]. We set $(\g_\k^*)_{\rm rs}\,:=\,
\kappa\big(G_\k\cdot(\t_\k)_{\rm reg}\big)$ and call the elements of
$(\g_\k^*)_{\rm rs}$ {\it regular semisimple linear functions} on
$\g_\k$.

Now define $\bar{H}:=\prod_{\alpha\in\Phi}h_\alpha$, an element of
$S(\t_\k)^W$, and pick $H\in S(\g_\k)^{G_\k}$ such that
$\Psi(H)=\bar{H}$. It is well known (and easy to see when $p\gg 0$)
that for all $\chi\in\g_\k^*$ and $f\in S(\g_\k)^{G_\k}$ one has
$f(\chi)=f(\chi_s)$. As $(\g_\k^*)_{\rm rs}\,=\,
G_\k\cdot(\t_\k^*)_{\rm reg}$ and $\chi\in(\t^*_\k)_{\rm reg}$ if
and only if $\bar{H}(\chi)\ne 0$, the $G_\k$-conjugacy of maximal
toral subalgebras of $\g_\k$ implies that
\begin{equation}\label{rss}
(\g_\k^*)_{\rm rs}\,=\,\{\chi\in\g_\k^*\,|\,\,H(\chi)\ne 0\}
\end{equation} is a principal
Zariski open subset of $\g_\k^*$. The Weyl group $W$ acts on the
affine variety $(G_\k/T_\k)\times (\t_\k^*)_{\rm reg}$ by the rule
$w(gT_\k,\lambda)=(gw^{-1}T_\k,w(\lambda))$ and this action commutes
with the left regular action of $G_\k$ on the first factor. It
follows from [\cite{Bo}, Prop.~II.\,6.6 and Thm.~AG. 17.3] that the
coadjoint action-morphism gives rise to a $G_\k$-equivariant
isomorphism of affine algebraic varieties
$$\big((G_\k/T_\k)\times (\t_\k^*)_{\rm reg}\big)/W\stackrel{\sim}{\too}(\g_\k^*)_{\rm rs};$$
see [\cite{T}, 1.3] for more detail.
\subsection{} For a vector space $V$ over $\k$ the the {\it Frobenius
twist} $V^{(1)}$ is defined as the vector space over $\k$ with the
same underlying abelian group as $V$ and with scalar multiplication
given by $\lambda\cdot v:=\lambda^{1/p}v$ for all $v\in V$ and
$\lambda\in\k$. The polynomial functions on $V^{(1)}$ are the $p$-th
powers of those on $V$. The identity map $V\to V^{(1)}$ is a
bijective closed morphism of affine varieties, called the {\it
Frobenius morphism}. The image of a subset $Y\subseteq V$ under this
morphism is denoted by $Y^{(1)}$. The Frobenius twist of a
$\k$-algebra $V$ is defined similarly: the scalar multiplication is
modified as above, but the product in $V$ is unchanged. If $V$ has
an ${\mathbb F}_p$-structure and $G_\k$ acts on $V$ as algebra
automorphisms via a rational representation $\rho\colon\,
G_\k\to{\rm GL}(V)$ defined over ${\mathbb F}_p$, then $G_\k$ also
acts on $V^{(1)}$ (as algebra automorphisms) via the rational
representation $\rho\circ{\rm Fr}$, where ${\rm Fr}$ is the
Frobenius endomorphism of $G_\k$. This action coincides with the one
given by composing $\rho$ with the Frobenius endomorphism of ${\rm
GL}(V)$ associated by the ${\mathbb F}_p$-structure of $V$.

The preceding remark applies in the case where $V=S(\g_\k)$ and
$\rho\colon\,G_\k\to \mathrm{GL}(V)$ is the rational $G_\k$-action
by algebra automorphisms extending the adjoint action of $G_\k$. The
${\mathbb F}_p$-structure of $S(\g_\k)$ is given by the canonical
isomorphism $S(\g_\k)\cong S(\g_{{\mathbb F}_p})\otimes_{{\mathbb
F}_p}\k$ where $\g_{{\mathbb F}_p}=\g_\Z\otimes_{\Z}{\mathbb F}_p$.
Thus, there is a $\k$-algebra isomorphism $\phi\colon\,
S(\g_\k)^{(1)}\stackrel{\sim}{\to}S(\g_\k)$ such that $\phi(g\cdot
f)\,=\,(g^{\rm Fr})(\phi(f))$ for all $g\in G_\k$ and $f\in
S(\g_\k)^{(1)}$.

The rule $g\star f:=\phi^{-1}(g(\phi(f)))$ defines a rational action
of $G_\k$ on $S(\g_\k)^{(1)}=\, \k[(\g_\k^{(1)})^*]\cong\,
\k[(\g_\k^*)^{(1)})]$. In [\cite{T}], the induced action of $G_\k$
on $(\g_\k^*)^{(1)}$ is called the the {\it star action}. By
construction, it has the property that
\begin{equation}\label{star1}
g^{\rm Fr}\star\chi\,=\,g\cdot \chi\qquad\quad \ \ \,
\big(\forall\,g\in G_\k,\ \chi\in (\g^*_\k)^{(1)}\big).
\end{equation}

It was first observed in [\cite{Kry}] that the algebra map $\eta\colon\,S(\g_\k)^{(1)}=S(\g_\k^{(1)})\to\,
Z(\g_\k)$ sending $x\in\g_\k$ to $\eta(x)\in Z_p(\g_\k)$ is a $G_\k$-equivariant
algebra isomorphism. One checks easily that $\eta\circ\Psi=\tilde{\Psi}\circ \eta$. Also,
$\gamma(\eta(t))=\eta(t)$ for all $t\in\t_\k$, which stems from the fact that $\rho(t^{[p]})=\rho(t)^p$.

\subsection{} In [\cite{T}], Tange introduced a principal open subset
$\mathcal{Z}_{\rm rs}$ of $\mathcal{Z}$ and showed that  it is
isomorphic to a  principal open subset of $\g_\k^*$ contained in
$(\g^*_\k)_{\rm rs}$. In order to explain his construction in detail
we need a more explicit description of the variety $\mathcal{Z}$.

Recall from Sect.~\ref{sec1} that
$Z(\g_\k)^{G_\k}=\,U(\g_\k)^{G_\k}=\,\k[\overline{\psi}_1,\ldots,\overline{\psi}_l]$
is a polynomial algebra in $l$ variables, where
$\overline{\psi}_1,\ldots,\overline{\psi}_l$ are the images in
$U(\g_\k)$ of algebraically independent generators $\psi_1,\ldots,
\psi_l$ of $Z(\g)$ contained in $U(\g_\Z)$. In view of (\ref{HC})
and properties of $\gamma$, this implies that both
$S(\t_\k)^{W_{\,\bullet}}$ and $S(\t_\k)^{W}$ are polynomial
algebras in $l$ variables. It is worth mentioning that the map in
(\ref{Chev}) gives rise to a natural isomorphism
$S(\g_\k^{(1)})^{G_\k}\stackrel{\sim}{\too} S(\t_\k^{(1)})^W$.

By Veldkamp's theorem,
$$Z(\g_\k)\,\cong\, Z_p(\g_\k)\otimes_{Z_p(\g_\k)^{G_\k}}U(\g_\k)^{G_\k}$$
and, moreover, $Z_p(\g_\k)$ is a free $Z_p(\g_\k)$-module with basis
$\{\overline{\psi}_1^{\,a_1}\cdots\overline{\psi}_l^{\,a_l}\,|\,0\le
a_i\le p-1\}$; see [\cite{Ve}]. A geometric interpretation of
Veldkamp's theorem is given in [\cite{MR}]. Following [\cite{T},
1.6] we let $\xi\colon\,\t_\k^*\to(\t_\k^{(1)})^*$ be the morphism
induced by $\eta\colon\,S(\t_\k^{(1)})\to U(\t_\k)=S(\t_\k)$ and let
$\zeta\colon\,(\g_\k^{(1)})^*\to (\t_\k^{(1)})^*/W$ be the morphism
associated with the composite
$$\k[(\t_\k^{(1)})^*]^W\stackrel{\sim}{\too}\k[(\g_\k^{(1)})^*]^{G_\k}\hookrightarrow\,
\k[(\g_\k^{(1)})^*],$$ where the first isomorphism is induced by
$\Psi^{-1}$. Let $\pi\colon\,(\t_\k^{(1)})^*\to(\t_\k^{(1)})^*/W$
and $\pi_{\,\bullet}\colon\, \t_\k^*\to\t_\k^*/W_{\,\bullet}$ be the
quotient morphisms. Note that $\xi(\lambda)(t)=\lambda(t)^p-
\lambda(t^{[p]})$. If $\lambda$ lies in the ${\mathbb F}_p$-span of
$\Pi$, then $\lambda(t)^p=\lambda(t^{[p]})$ for all $t\in\t_\k$
because $h_\alpha^{[p]}=h_\alpha$ for all $\alpha\in\Phi$. Thus,
$\xi(\lambda)=0$ in that case. Applying this with $\lambda=\rho$, we
see that
$\xi(w_{\,\bullet}\,\lambda)=\xi(w(\lambda))=w(\xi(\lambda))$ for
all $t\in\t_\k$ and $w\in W$. Also, $\zeta(\chi)=\pi(\chi_s')$,
where $\chi_s'$ is a $G_\k$-conjugate of $\chi_s$ that lies in
$(\t_\k^{(1)})^*$ (it is important here that $\pi(\chi_s')$ is
independent of the choice of $\chi_s'$, which follows from the fact
that the intersection of $(\t_\k^{(1)})^*$ with $G_\k\cdot\chi$ is a
single $W$-orbit in $(\t_\k^{(1)})^*$). Finally, define
$\nu\colon\,(\g_\k^*)^{(1)}\stackrel{\sim}{\to}(\g_\k^{(1)})^*$ by
setting $\nu(\chi)=\chi^p$ for all $\chi\in(\g_\k^*)^{(1)}$.
 By [\cite{MR}, Cor.~3], there is a canonical $G_\k$-equivariant isomorphism
\begin{equation}\label{MR}
 \mathcal{Z}\stackrel{\sim}{\too}(\g_\k^*)^{(1)}\times_{(\t_\k^*)^{(1)}/W}\,\t^*_\k/W_{\,\bullet}
 \end{equation}
 where the $G_\k$-action on the fibre product is given by from the coadjoint action
 on the first factor, the morphism $\t_\k^*/W_{\,\bullet}\too(\t_\k^{(1)})^*/W$ is induced by $\xi$
 and the morphism $(\g^*)^{(1)}\to(\t_\k^{(1)})^*/W$ is the composite of $\nu$ and $\zeta$.

 \subsection{} In what follows we identify $\mathcal{Z}$ with a closed subset of the affine space
 $(\g^*_\k)^{(1)}\times\t_\k^*/W_{\,\bullet}$ by means of isomorphism (\ref{MR}).
Note that
$(\chi,\pi_{\,\bullet}(\lambda))\in(\g^*_\k)^{(1)}\times\t_\k^*/W_{\,\bullet}$
belongs to $\mathcal{Z}$ if and only if there exists $w\in W$ such
that $$\lambda(t)^p-\lambda(t^{[p]}) \,=\,w(\chi'_s)^p\qquad\quad\ \
(\forall\, t\in\t_\k)$$ where $\chi_s'\in \,\t_\k^*\cap
(G_\k\cdot\chi_s)$.

Recall that $G_\k$ operates on $(\g_\k^*)^{(1)}$ via the star action (\ref{star1}). From the above discussion
it follows that this action gives rise to the star action on the Zassenhaus variety $\mathcal{Z}$ via:
\begin{equation}\label{star2}
g\star(\chi,\pi_{\,\bullet}(\lambda)):=(g\star\chi,\pi_{\,\bullet}(\lambda))\qquad\quad\
\ \big(\forall\,\,
(\chi,\pi_{\,\bullet}(\lambda))\in\mathcal{Z}\big).
\end{equation}
Following [\cite{T}, Sect.~2], we now define $\mathcal{Z}_{\rm
rs}\,:=\,{\rm pr}_1^{-1} \big((\g_{\rm rs}^*)^{(1)}\big)$, where
${\rm pr}_1\colon\,\mathcal{Z}\to (\g_\k^*)^{(1)}$ is the first
projection. In view of (\ref{rss}) it is straightforward to see that
$$\mathcal{Z}_{\rm rs}\,=\,\big\{(\chi,\pi_{\,\bullet}(\lambda))\in\mathcal{Z}\,|\,\,H^p(\chi)\ne 0\big\}$$
is a nonempty principal open subset of $\mathcal{Z}$.

Set $\bar{F}:=\prod_{\alpha\in\Phi}\,(h_\alpha^p-h_\alpha)$, an
element of $S(\t_\k)^W$, and pick $F\in S(\g_\k)^{G_\k}$ with
$\Psi(F)=\bar{F}$. Note that $H\mid F$ because $\bar{H}$ divides
$\bar{F}$. Define
 $$(\t_\k^*)'_{\rm rs}\,:=\,\{\chi\in\t_\k^*\,|\,\,\bar{F}(\chi)\ne 0\}\ \ \mbox{ and
 }\ \
(\g_\k^*)'_{\rm rs}\,:=\,\{\chi\in\g_\k^*\,|\,\,F(\chi)\ne 0\}.$$
Clearly, $(\t_\k^*)'_{\rm rs}$ consists of all $\chi\in\t_\k^*$ with
$\chi(h_\alpha)\not\in{\mathbb F}_p$ for all $\alpha\in\Phi$. The
preceding remark shows that $(\g_\k^*)'_{\rm rs}$ is a principal
open subset of $\g_\k^*$ contained in the principal open set
$(\g_\k^*)_{\rm rs}\,=\,G_\k\cdot(\t_\k^*)_{\rm rs}$. Therefore,
$(\g_\k^*)_{\rm rs}'\,= \,G_\k\cdot(\t_\k^*)'_{\rm rs}$. By
[\cite{T}, Thm.~1], there is an isomorphism of algebraic varieties
$\beta\colon\, \mathcal{Z}_{\rm
rs}\stackrel{\sim}{\too}(\g_\k^*)_{\rm rs}'$ which intertwines the
star action of $G_\k$ on $\mathcal{Z}$ with the coadjoint action in
the following sense:
\begin{equation}\label{inter}
\beta(g\star(\chi,\pi_{\,\bullet}(\lambda)))=g\cdot
\beta((\chi,\pi_{\,\bullet}(\lambda)))\qquad\quad\ \,
\big(\forall\,\,(\chi,\pi_{\,\bullet}(\lambda)) \in\mathcal{Z}_{\rm
rs}\big).
\end{equation}
\subsection{} For $1\le i\le l$ set $\varphi_i:={\rm gr}\,\psi_i$, a
homogeneous element of $S(\g_\Z)={\rm gr}\, U(\g_\Z)$. Since $p\gg
0$, we may also assume that the elements
$\varphi_1,\ldots,\varphi_l$ generate the invariant algebra
$S(\g)^\g$ and their images
$\overline{\varphi}_1,\ldots,\overline{\varphi}_l$ in
$S(\g_\k)=S(\g_\Z)\otimes_\Z\k$ generate $S(\g_\k)^{G_\k}$.

\medskip

We are now ready to prove the main result of this section:
\begin{theorem}\label{pur}
If the Gelfand--Kirillov conjecture holds for $\g$, then for all
$p\gg 0$ the field of rational functions $\k(\g_\k^*)=${\sl
Frac}$\,\,S(\g_\k)$ is purely transcendental over its subfield
$k(\g_\k^*)^{G_\k}=\,k(\overline{\varphi}_1,\ldots,\overline{\varphi}_l)$.
\end{theorem}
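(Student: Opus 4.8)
The plan is to obtain the required purity by transporting the modular Gelfand--Kirillov conjecture for $\g_\k$ — which holds for all $p\gg 0$ by Theorem~\ref{GK mod p} — along Tange's isomorphism $\beta$. So assume $\mathcal{D}(\g)\cong\mathcal{D}_{r,s}(\mathbb K)$. First I would extract from the proof of Theorem~\ref{GK mod p} a \emph{structured} form of the conjecture for $\g_\k$: that proof produces $w_1,\ldots,w_{2N}\in\mathcal{D}(\g_\k)$ obeying the Weyl relations \eqref{1}, \eqref{2}, together with identities $Q_kx_k=P_k$ in which $P_k,Q_k$ are polynomials in the $w_i$ with coefficients in $U(\g_\k)^{G_\k}=\k[\overline{\psi}_1,\ldots,\overline{\psi}_l]$. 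Since the images of $x_1,\ldots,x_n$ generate $\mathcal{D}(\g_\k)=\operatorname{Frac}U(\g_\k)$ as a skew field, the subalgebra generated by $w_1,\ldots,w_{2N}$ and $U(\g_\k)^{G_\k}$ has $\mathcal{D}(\g_\k)$ as its skew field of fractions, and (by [\cite{Bois}, Lem.~1.2.3], or a transcendence-degree count using that $\mathbf{A}_N(\k)$ is Azumaya over its centre in characteristic $p$) that subalgebra is isomorphic to $\mathbf{A}_N(\k)\otimes_\k U(\g_\k)^{G_\k}$, the $w_i$ playing the role of the Weyl generators. Because $\operatorname{char}\k=p$, the $p$-th powers $w_1^p,\ldots,w_{2N}^p$ are central and $\mathbf{A}_N(\k)$ is free of rank $p^{2N}$ over $\k[w_1^p,\ldots,w_{2N}^p]$, so the centre $\Q(\g_\k)$ of $\mathcal{D}(\g_\k)$ is the rational function field
$$\Q(\g_\k)\;=\;\k\big(w_1^p,\ldots,w_{2N}^p,\,\overline{\psi}_1,\ldots,\overline{\psi}_l\big)$$
in $n=2N+l$ variables over $\k$. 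In particular $\Q(\g_\k)$ is purely transcendental, of transcendence degree $n-l$, over its subfield $\operatorname{Frac}\big(U(\g_\k)^{G_\k}\big)=\k(\overline{\psi}_1,\ldots,\overline{\psi}_l)$.

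Next I would identify $\operatorname{Frac}\big(U(\g_\k)^{G_\k}\big)$ with the field of invariants of $\Q(\g_\k)$ for the star action. By \eqref{star2} the star action moves only the first factor of the fibre product \eqref{MR}, so the second projection ${\rm pr}_2\colon\mathcal{Z}\too\t_\k^*/W_{\,\bullet}=\operatorname{Maxspec}U(\g_\k)^{G_\k}$ is $\star$-invariant; conversely, expanding a $\star$-invariant element of $Z(\g_\k)$ in Veldkamp's basis $\{\overline{\psi}_1^{a_1}\cdots\overline{\psi}_l^{a_l}\mid 0\le a_i<p\}$ of $Z(\g_\k)$ over $Z_p(\g_\k)$, and using that on $Z_p(\g_\k)\cong S(\g_\k^{(1)})$ the $\star$-invariants coincide with the adjoint $G_\k$-invariants (because $g^{\rm Fr}\star\chi=g\cdot\chi$ by \eqref{star1} and the Frobenius endomorphism of $G_\k$ is surjective), one obtains $Z(\g_\k)^\star=U(\g_\k)^{G_\k}$. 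Since $\mathcal{Z}$ is normal and, via $\beta$, the generic $\star$-orbit becomes a closed regular semisimple coadjoint orbit, Rosenlicht's theorem gives $\Q(\g_\k)^\star=\operatorname{Frac}\big(Z(\g_\k)^\star\big)=\k(\overline{\psi}_1,\ldots,\overline{\psi}_l)$. Combined with the first paragraph, this shows that $\Q(\g_\k)$ is purely transcendental over $\Q(\g_\k)^\star$.

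Finally I would transport this along $\beta$. Both $\mathcal{Z}_{\rm rs}\subset\mathcal{Z}$ and $(\g_\k^*)'_{\rm rs}\subset\g_\k^*$ are dense $G_\k$-stable principal open subsets, so $\beta$ induces a field isomorphism $\beta^*\colon\k(\g_\k^*)\iso\k(\mathcal{Z})=\Q(\g_\k)$, and by \eqref{inter} it intertwines the coadjoint action on $\k(\g_\k^*)$ with the star action on $\Q(\g_\k)$; hence $\beta^*$ carries $\k(\g_\k^*)^{G_\k}$ onto $\Q(\g_\k)^\star$. As a field isomorphism preserves pure transcendence over a subfield, it follows that $\k(\g_\k^*)$ is purely transcendental over $\k(\g_\k^*)^{G_\k}$. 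To finish, one recognises $\k(\g_\k^*)^{G_\k}$: the generic fibre of the adjoint quotient $\g_\k^*\too\operatorname{Maxspec}S(\g_\k)^{G_\k}$ is a single regular semisimple orbit, so Rosenlicht's theorem gives $\k(\g_\k^*)^{G_\k}=\operatorname{Frac}\big(S(\g_\k)^{G_\k}\big)$, which for $p\gg 0$ equals $\k(\overline{\varphi}_1,\ldots,\overline{\varphi}_l)$ by the subsection preceding the theorem. The step I expect to be most delicate is the first one — passing from the bare conjecture for $\g_\k$ to the structured form with commutative part exactly $U(\g_\k)^{G_\k}$ — together with the identification $Z(\g_\k)^\star=U(\g_\k)^{G_\k}$ in the second; without these one would only know that $\Q(\g_\k)$, equivalently $\k(\g_\k^*)$, is purely transcendental over $\k$, which is far too weak, since a subfield of a purely transcendental extension need not inherit that property.
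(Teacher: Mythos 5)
Your proposal is correct and follows essentially the same route as the paper: extract from the proof of Theorem~\ref{GK mod p} the Weyl generators $w_1,\ldots,w_{2N}$ with coefficient field $U(\g_\k)^{G_\k}=\k[\overline{\psi}_1,\ldots,\overline{\psi}_l]$, observe that the centre $\Q(\g_\k)=\k(\mathcal{Z})$ equals $\k(w_1^p,\ldots,w_{2N}^p,\overline{\psi}_1,\ldots,\overline{\psi}_l)$ and is therefore purely transcendental over $\k(\overline{\psi}_1,\ldots,\overline{\psi}_l)=\Q(\g_\k)^{G_\k}$, and transport this along Tange's $G_\k$-equivariant isomorphism $\beta$. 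Your justifications of the sub-steps (Azumaya/freeness over $\k[w_i^p]$ in place of the citation of [\cite{Bois}, 1.1.3], Rosenlicht plus the coincidence of star and ordinary invariants in place of the paper's direct Veldkamp-basis computation of $\Q(\g_\k)^{G_\k}$) are minor variants of the paper's, not a different argument.
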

\begin{proof}
Suppose the Gelfand--Kirillov conjecture holds for $\g$.
Theorem~\ref{GK mod p} then says that it holds for $\g_\k$ for all
$p\gg 0$. More precisely, it follows from the proof of
Theorem~\ref{GK mod p} that $\mathcal{D}(\g_\k)$ is generated as a
skew-field by $\overline{\psi}_1,\ldots,\overline{\psi}_l\in
Z(\g_\k)$ and elements $w_1,\ldots, w_{2N}$ which satisfy relations
(\ref{1}) and (\ref{2}) (here $N=|\Phi_+|$). For $1\le i\le 2N$ set
$z_i:=w_i^p$. Since ${\mathcal D}(\g_\k)\,\cong
\mathcal{D}_{N,\,l}(\k)$ as $\k$-algebras, the elements $z_1,\ldots,
z_{2N}$ are central in {\sl Frac}$\,\,U(\g_\k)$. Moreover, the
centre of ${\mathcal D}(\g_\k)$ is $\k(z_1,\ldots,
z_{2N},\overline{\psi}_1,\ldots,\overline{\psi}_l)$ and the elements
$z_1,\ldots, z_{2N},\overline{\psi}_1,\ldots,\overline{\psi}_l$ are
algebraically independent; see [\cite{Bois}, 1.1.3] for more detail.

 On the other hand, it is well known that in the modular case ${\mathcal D}(\g_\k)$ is the central
 localisation
 of $U(\g_\k)$ by the set $Z_p(\g_\k)^\times$ of nonzero elements of
 $Z_p(\g_\k)$. Likewise, the centre of ${\mathcal D}(\g_\k)$ is the
 localisation of $Z(\g_\k)$ by the set $Z_p(\g_\k)^\times$. It follows that
 the centre of ${\mathcal D}(\g_\k)$ equals $\Q(\g_\k)=\Q_p[\overline{\psi}_1,\ldots,\overline{\psi}_l]$,
 where $\Q_p$ is the field of fractions of $Z_p(\g_\k)$.
 Since the $\Q_p$-vector space $\Q(\g_\k)$ has a basis consisting of monomials in
 $\overline{\psi}_1,\ldots,\overline{\psi}_l$, it is straightforward to see that
 the field of invariants $\Q(\g_\k)^{G_\k}$ coincides with $\Q_p^{G_\k}[\overline{\psi}_1,\ldots,\overline{\psi}_l]$.
 As $Z_p(\g_\k)$ is a polynomial algebra and the
 connected group $G_\k$ coincides with its derived subgroup, we have that
 $\Q_p^{G_\k}\,=\,${\sl Frac}$\,\,Z_p(\g_\k)^{G_\k}$. This shows that
 $\Q(\g_\k)^{G_\k}\,=\,${\sl Frac}$\,\,Z(\g_\k)^{G_\k}\,=\,\k(\overline{\psi}_1,\ldots,\overline{\psi}_l)$.
 We thus deduce that
\begin{equation}\label{fields}
 \k(\mathcal{Z})=\,\Q(\g_\k)\,=\,\k(z_1,\ldots, z_{2N},\overline{\psi}_1,\ldots,\overline{\psi}_l)\,=\,
 \k(\mathcal{Z})^{G_\k}\big(z_1,\ldots,z_{2N}\big)
 \end{equation}
 is purely transcendental over the field of invariants
 $\k(\mathcal{Z})^{G_\k}\,=\,\k(\overline{\psi}_1,\ldots,\overline{\psi}_l)$.

 Recall that in our geometric realisation (\ref{MR}) the ordinary action of $G_\k$ on $\mathcal{Z}$
is given by $g\cdot(\chi,\pi_{\,\bullet}(\lambda))=(g\cdot\chi,
\pi_{\,\bullet}(\lambda))$ for all $g\in G_\k$ and all
$(\chi,\pi_{\,\bullet}(\lambda))\in\mathcal{Z}$. Since in (\ref{MR})
we regard $\chi$ as an element of $(\g_\k^*)^{(1)}$, comparing this
with (\ref{star1}) and (\ref{star2}) yields that every orbit with
respect to the ordinary action of $G_\k$ on $\mathcal{Z}$ is an
orbit of $G_\k$ with respect to the star action and vice versa. From
this it follows that both actions have the same rational invariants.

The comorphism of $\beta^{-1}\colon\,(\g_\k^*)_{\rm
rs}'\stackrel{\sim}{\too}\mathcal{Z}_{\rm rs}$ induces a field
isomorphism between $\k(\mathcal{Z})$ and $\k(\g_\k^*)$; we call it
$b$. Combining (\ref{inter}) with the preceding remark one observes
that $b$ sends the subfield
$\k(\mathcal{Z})^{G_\k}=\,\k(\overline{\psi}_1,\ldots,
\overline{\psi}_l)$ onto $\k(\g_\k^*)^{G_\k}$. But then
(\ref{fields}) shows that
$\k(\g^*_\k)\,=\,\k(\g_\k^*)^{G_\k}\big(b(z_1),\ldots,
b(z_{2N})\big)$ is purely transcendental over
$\k(\g_\k^*)^{G_\k}=\,\k(\overline{\varphi}_1,\ldots,\overline{\varphi}_l)$.
This completes the proof.
 \end{proof}
\begin{rem}\label{r2}
Combining Theorem~\ref{pur} with the Killing isomorphism
$\kappa\colon\,\g_\k\stackrel{\sim}{\to}\g_\k^*$ we see that for all
$p\gg 0$ the field of rational functions $\k(\g_\k)$ is purely
transcendental over its subfield $k(\g_\k)^{G_\k}$.
\end{rem}
\section{\bf Purity, generic tori and base change}
\subsection{}\label{4.1}  We keep the notation introduced in
Sections~\ref{sec1} and \ref{sec2} and assume that ${\rm
char}(\k)=p\gg 0$. Recall that
$\{x_1,\ldots,x_n\}\,=\,\{h_\alpha\,|\,\,\alpha\in\Pi\}\cup\{e_\alpha\,|\,\,\alpha\in\Phi\}$
is a Chevalley basis of $\g_\Z$ and we identify the $x_i$'s with
their images in $\g_\k$. Write $\Pi=\{\alpha_1,\ldots,\alpha_l\}$
and let $\{X_1,\ldots, X_l\}$ and
$\{X_{\alpha}\,|\,\,\alpha\in\Phi\}$ be two sets of independent
variables. Set $K:=\mathbb{Q}(X_1,\ldots,X_l)$ and
$\widetilde{K}:=K(X_\alpha\,|\,\,\alpha\in\Phi)$ and denote by $K_p$
an algebraic closure of $\k(X_1,\ldots,X_l)$. Write
$\widetilde{K}_p:=K_p(X_\alpha\,|\,\,\alpha\in\Phi)$ and denote by
$\mathbb{K}_p$ an algebraic closure of $\widetilde{K}_p$. To ease
notation we shall assume that $\mathbb K$ is an algebraic closure of
$\widetilde{K}$ (this will cause no confusion).

Given a field $F$ we write $\g_F$ for the Lie algebra
$\g_\Z\otimes_\Z F$ over $F$ and denote by $G_F$ the simple, simply
connected algebraic $F$-group with Lie algebra $\g_F$. Let
$\widetilde{t}:=\sum_{i=1}^lX_ih_{\alpha_i}$ and
$\widetilde{x}:=\sum_{\alpha\in\Phi}X_\alpha e_\alpha$. Since the
$X_i$'s are algebraically independent, $\widetilde{t}$ is a regular
semisimple element of $\g_{K}$ contained in $\g_\Z[X_1,\ldots,
X_l]$. Its image $\widetilde{t}_p:=\widetilde{t}\otimes 1\in
\g_\Z[X_1,\ldots,X_l]\otimes_Z\k$ is a regular semisimple element of
$\g_{K_p}$. The image of $\widetilde{x}$ in
$\g_\Z[X_\alpha\,|\,\,\alpha\in\Phi]\otimes_\Z\,\k$ is denoted by
$\widetilde{x}_p$. Set $\widetilde{y}:=\widetilde{t}+\widetilde{x}$
and $\widetilde{y}_p:=\widetilde{t}_p+\widetilde{x}_p$. These are
regular semisimple elements of $\g_{\widetilde{K}}$ and
$\g_{\widetilde{K}_p}$, respectively.

Write $G_p$ for the group $G_{\mathbb{K}_p}$ and $\g_p$ for its Lie
algebra $\g_{\mathbb{K}_p}$. Given a closed subgroup $H$ of $G_p$
defined over $\widetilde{K}_p$ we write $H_p$ for the group
$H(\mathbb{K}_p)$. Set $T^{\rm gen}:=Z_{G}(\widetilde{y})$ and
$T^{\rm gen}_p:=Z_{G_p}(\widetilde{y}_p)$. It follows from
[\cite{BoSp}, 4.3] that $T^{\rm gen}$ and $T^{\rm gen}_p$ are
maximal tori of $G$ and $G_p$ defined over $\widetilde{K}$ and
$\widetilde{K}_p$, respectively. Let $\t^{\,\rm gen}:=\Lie(T^{\,\rm
gen})$ and $\t^{\,\rm gen}_p:=\Lie(T^{\,\rm gen}_p)$.

\noindent \subsection{}\label{4.2} Let $\mathcal{T}_p$ be the
variety of maximal toral subalgebras of $\g_{p}$. As all maximal
toral subalgebras of $\g_{p}$ are conjugate under $G_{p}$ and the
normaliser $N_{p}$ of $\t_{p}:=\t_\k\otimes_\k \mathbb{K}_p$ in
$G_{p}$ is a reductive group, $\mathcal{T}_p\cong G_{p}/N_{_p}$ is
an affine algebraic variety. It follows from a well known result of
Grothendieck [\cite{DG}, Exp.~XIV, Thm.~6.2] that the variety
$\mathcal{T}_p$ is $K_p$-rational. More precisely, let $\m_p$ be
orthogonal complement to $\t_p$ with respect to the Killing form of
$\g_{K_p}$. A natural $K_p$-defined birational isomorphism between
$\mathcal{T}_p$ and $\m_p$ can be obtained as follows; see
[\cite{BoSp}, 7.9]:

 The set $\mathcal{T}_p^{\,\circ}$
of all $\h\in\mathcal{T}_p$ with $\h\cap\m_p=0$ is open, nonempty in
$\mathcal{T}_p$ and the set
$$\m_p^{\,\circ}\,:=\,\{m\in \m_p\,|\,\,
\widetilde{t}_p+m\in(\g_p)_{\rm rs}\,\, \mbox{ and }\,\, {\rm
ad}(\widetilde{t}_p+m)_{\vert\,\m_p} \,\mbox{ is injective}\,\}$$ is
open, nonempty in $\m_p$. For every $m\in\m_p^{\,\circ}$ the
centralizer of $\widetilde{t}_p+m$ in $\g_p$ is an element of
$\mathcal{T}_p^{\,\circ}$. Since for every
$\h\in\mathcal{T}_p^{\,\circ}$ there exists a unique
$m=m(\h)\in\m_p$ with $\widetilde{t}_p+m\in\h$, the map
$\mu\colon\,\mathcal{T}_p^{\,\circ}\too\m_\p^{\,\circ},\ \,\h\mapsto
m(\h),$ gives rise to a $K_p$-defined birational isomorphism between
$\mathcal{T}_p$ and $\m_p$. The $K_p$-defined birational map $\mu$
enables us to identify the field $K_p(\mathcal{T}_p)$ with
$K_p(\m_p)\,\cong\,
K_p(X_\alpha\,|\,\,\alpha\in\Phi)\,=\,\widetilde{K}_p$. It is
straightforward to see that $\widetilde{x}_p\in\m_p^\circ$ and
$\mu(\t^{\,\rm gen}_p)=
\widetilde{t}_p+\widetilde{x}_p=\widetilde{y}_p$. Since the field
$K_p(\widetilde{x}_p)=K_p(\widetilde{y}_p)$ is nothing but
$\widetilde{K}_p$, we now deduce that $\t^{\,\rm gen}_p$ is a
generic point of the $K_p$-variety $\mathcal{T}_p$.

\subsection{}\label{4.3}
Recall that $\varphi_1,\ldots,\varphi_l$ are free generators of
$S(\g)^\g$ contained in $S(\g_\Z)$ and such that
$S(\g_\k)^{G_\k}\,=\,\k[\overline{\varphi}_1,\ldots,
\overline{\varphi}_l]$, where $\overline{\varphi}_i=\varphi_i\otimes
1\in S(\g_\Z)\otimes_\Z\k=\,S(\g_\k)$. We identify the
$G_\k$-modules $\g_\k$ and $\g_\k^*$ by means the Killing
isomorphism $\kappa$; see Remark~\ref{r2}. Thus, we may regard
$\overline{\varphi}_1,\ldots,\overline{\varphi}_l$ as free
generators of the invariant algebra $\k[\g_\k]^{G_\k}$.

Let $Y_p$ be the fibre $\overline{\varphi}^{\,-1}(\widetilde{y}_p)$
of the adjoint quotient map
$\overline{\varphi}\colon\,\g_{p}\too\g_{p}/\!\!/G_{p},$ that is,
$$Y_p\,:=\,\{y\in\g_{p}\,|\,\,\overline{\varphi}_i(y)=
\overline{\varphi}_i(\widetilde{y}_p)\ \mbox{ for all }\, 1\le i\le
l\}.$$  As $p\gg0$, all fibres of $\overline{\varphi}$ are
irreducible complete intersections of dimension $n-l$ in the affine
space $\g_{p}$; see [\cite{Ve}] for more detail. Since
$\widetilde{y}_p$ is regular semisimple, the orbit
$G_{p}\cdot\widetilde{y}_p$ is Zariski closed in $\g_{p}$ and dense
in $Y_p$. This shows that $Y_p\,=\,G_{p}\cdot\widetilde{y}_p$ is a
smooth variety and the defining ideal of $Y_p$ is generated by the
regular functions $\overline{\varphi}_1-
\overline{\varphi}_1(\widetilde{y}_p),\ldots,\overline{\varphi}_l-
\overline{\varphi}_l(\widetilde{y}_p)$. Since $\widetilde{y}_p$ is
regular semisimple, the orbit map $G_{p}\to Y_p$ is separable.
Applying [\cite{Bo}, Prop.~II.\,6.6], we now deduce that the
$\widetilde{K}_p$-varieties $G_{p}/T^{\rm gen}_p$ and $Y_p$ are
$\widetilde{K}_p$-isomorphic (recall from (\ref{4.1}) that $T^{\rm
gen}_p$ is the centraliser of $\widetilde{y}_p$ in $G_{p}$).

\subsection{}\label{4.4}
Our next result is inspired by [\cite{CT}, Thm.~4.9]. The argument
in [\cite{CT}] exploits the notion of versality of
$(G,S)$-fibrations introduced in [\cite{CT}, Sect.~3] and seems to
rely on the characteristic zero hypothesis (see the footnote on
p.~20 of [\cite{CT}]). Our argument is different and it works under
very mild assumptions on the characteristic of the base field.
\begin{prop}\label{gen}
If the field $\k(\g_\k^*)$ is purely transcendental over
$\k(\g_\k^*)^{G_\k}$, then the homogeneous space $G_{p}/T_p^{\,{\rm
gen}}$ is $\widetilde{K}_p$-birational to an affine space.
\end{prop}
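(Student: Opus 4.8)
The plan is to pass from the purity statement for $\k(\g_\k^*)$ over $\k(\g_\k^*)^{G_\k}$ to a statement about a generic fibre of the adjoint quotient map, and then recognise that generic fibre as the homogeneous space $G_p/T^{\,\rm gen}_p$ via the identifications set up in \ref{4.3}. First I would use the Killing isomorphism $\kappa\colon\g_\k\iso\g_\k^*$ (as in Remark~\ref{r2}) to rewrite the hypothesis as: $\k(\g_\k)$ is purely transcendental over $\k(\g_\k)^{G_\k}=\k(\overline\varphi_1,\ldots,\overline\varphi_l)$. So there exist $z_1,\ldots,z_{n-l}\in\k(\g_\k)$, algebraically independent over $\k(\overline\varphi_1,\ldots,\overline\varphi_l)$, with $\k(\g_\k)=\k(\overline\varphi_1,\ldots,\overline\varphi_l)(z_1,\ldots,z_{n-l})$. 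In geometric terms, the adjoint quotient morphism $\overline\varphi\colon\g_\k\too\g_\k/\!\!/G_\k=\mathbb{A}^l$ is a \emph{rational} fibre bundle with fibre $\mathbb{A}^{n-l}$: the field $\k(\g_\k)$ is a purely transcendental extension of the function field of the base $\mathbb{A}^l$ of transcendence degree $n-l$. The content I want to extract is that the \emph{generic} fibre of $\overline\varphi$, viewed as a variety over the function field of the base, is rational (birational to affine space) over that function field.

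The precise mechanism: let $k_0:=\k(\overline\varphi_1,\ldots,\overline\varphi_l)$ be the function field of $\g_\k/\!\!/G_\k$, so $\mathrm{Spec}\,k_0$ is the generic point of $\mathbb{A}^l$; the generic fibre $\mathscr{Y}:=\overline\varphi^{-1}(\mathrm{Spec}\,k_0)$ is a $k_0$-variety whose function field is $\k(\g_\k)$. The purity hypothesis says exactly that $k_0(\mathscr{Y})=\k(\g_\k)=k_0(z_1,\ldots,z_{n-l})$, i.e.\ $\mathscr{Y}$ is $k_0$-birational to $\mathbb{A}^{n-l}_{k_0}$. Now I base change along the embedding $k_0=\k(\overline\varphi_1,\ldots,\overline\varphi_l)\hookrightarrow\mathbb{K}_p$ determined by $\overline\varphi_i\mapsto\overline\varphi_i(\widetilde y_p)$ — these $l$ values are algebraically independent over $\k$ precisely because $\widetilde y_p$ is regular semisimple and the $X_i,X_\alpha$ are independent variables, so the map is well defined on $k_0$ and lands in $\widetilde{K}_p$ (hence in $\mathbb{K}_p$). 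Birationality of $k_0$-varieties is preserved under field extension of the base, so the fibre $Y_p=\overline\varphi^{-1}(\widetilde y_p)$ becomes $\mathbb{K}_p$-birational to $\mathbb{A}^{n-l}_{\mathbb{K}_p}$ — but I need birationality over $\widetilde{K}_p$, not just over $\mathbb{K}_p$, so I should be careful to perform the base change along $k_0\hookrightarrow\widetilde{K}_p$ rather than all the way to $\mathbb{K}_p$, which is fine since the image already lies in $\widetilde{K}_p$. Then $Y_p$ is $\widetilde{K}_p$-birational to an affine space of dimension $n-l=\dim Y_p$. Finally, \ref{4.3} gives that $Y_p$ and $G_p/T^{\,\rm gen}_p$ are $\widetilde{K}_p$-isomorphic, so $G_p/T^{\,\rm gen}_p$ is $\widetilde{K}_p$-birational to an affine space, as claimed.

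The main obstacle — and the step that needs genuine care rather than formalities — is the passage from the generic-point statement over the \emph{abstract} field $k_0$ to the statement over the \emph{specific} field $\widetilde{K}_p$ obtained by evaluating the invariants at $\widetilde y_p$, done compatibly with the base change of fibres. Concretely, one must check that the specialisation $\overline\varphi_i\mapsto\overline\varphi_i(\widetilde y_p)$ really identifies $Y_p$ with $\mathscr{Y}\otimes_{k_0}\widetilde{K}_p$ as $\widetilde{K}_p$-schemes, which uses the fact (recorded in \ref{4.3}, via \cite{Ve}) that for $p\gg0$ all fibres of $\overline\varphi$ are irreducible complete intersections cut out by $\overline\varphi_i-\overline\varphi_i(\widetilde y_p)$, so fibre-product formation behaves well and no embedded or extra components appear. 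One also has to know that $\overline\varphi_1(\widetilde y_p),\ldots,\overline\varphi_l(\widetilde y_p)$ are algebraically independent over $\k$ inside $\widetilde{K}_p$; this follows because $\widetilde y_p$ is a regular semisimple element whose semisimple part sweeps out a generic point of the Cartan, so its image under $\overline\varphi$ is a generic point of $\g_p/\!\!/G_p$. Granting these two points, everything else is the standard fact that birational equivalence of finite-type schemes over a field is stable under arbitrary base field extension.
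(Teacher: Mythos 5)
Your argument is correct and is essentially the paper's own proof in different clothing: the paper realises the same idea by restricting the explicit birational map $F=(F_1,\ldots,F_{2N},\overline{\varphi}_1,\ldots,\overline{\varphi}_l)$ to the fibre $Y_p$ through the generic point $\widetilde{y}_p$, observing that the image $F(Y_p^{\circ})$ lies in $\mathbb{A}^{2N}\times\mathrm{pt}$ and, being a $2N$-dimensional closed subset of an open set, is open in $\mathbb{A}^{2N}$. Your reformulation via the generic fibre over $\mathrm{Spec}\,\k(\overline{\varphi}_1,\ldots,\overline{\varphi}_l)$ followed by base change along $\overline{\varphi}_i\mapsto\overline{\varphi}_i(\widetilde{y}_p)$ is equivalent, and the two points you flag as needing care (the fibre being an irreducible complete intersection cut out by the $\overline{\varphi}_i-\overline{\varphi}_i(\widetilde{y}_p)$, and the algebraic independence of the $\overline{\varphi}_i(\widetilde{y}_p)$) are precisely the inputs the paper draws from subsection 4.3 and the genericity of $\widetilde{y}_p$.
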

\begin{proof}
If $\k(\g_\k^*)$ is purely transcendental over $\k(\g_\k^*)^{G_\k}$,
then there exist $F_1,\ldots, F_{2N}\in\k(\g_\k)$ such that
$\k(\g)=\k(F_1,\ldots,
F_{2n},\overline{\varphi}_1,\ldots,\overline{\varphi}_l)$. Then the
rational map $F\colon\,\g_\k\dashrightarrow {\mathbb
A}_\k^{2N}\times{\mathbb A}_\k^l$ taking $y\in\g_\k$ to $F(y):=
\big((F_1(y),\ldots,F_{2N}(y),\,(\overline{\varphi}_1(y),\ldots,\overline{\varphi}_l(y)\big)\in
{\mathbb A}_\k^{2N}\times{\mathbb A}_\k^l$ induces a
$\k$-isomorphism $F\colon\,U\stackrel{\sim}{\to} V$ between a
$\k$-defined nonempty open subset $U$ of $\g_\k$ and a $\k$-defined
nonempty open subset $V$ of ${\mathbb A}_\k^{2N}\times{\mathbb
A}_\k^{l}$.

Since $f(\widetilde{y}_p)\ne 0$ for every nonzero $f\in\k[\g_\k]$
and $U=\g_\k\setminus Z$ for some Zariski closed $Z\subsetneq \g_\k$
defined over $\k$, we see that $\widetilde{y}_p\in
U_{p}:=\,\g_{p}\setminus Z(\mathbb{K}_p)$. Likewise,
$V=\big({\mathbb A}_\k^{2N}\times{\mathbb A}_\k^l\big)\setminus Z'$
for some Zariski closed subset $Z'\subsetneq {\mathbb
A}_\k^{2N}\times{\mathbb A}_\k^l$ defined over $\k$. We set
$V_{p}:=\,\big({\mathbb A}_{\mathbb{K}_p}^{2N}\times{\mathbb
A}_{\mathbb{K}_p}^l\big)\setminus Z'(\mathbb{K}_p)$ and observe that
$F$ gives rise to a $\k$-defined isomorphism between $U_{p}$ and
$V_{p}$.

Put $Y^{\circ}_p:=Y_p\cap U_{p}$. As $\widetilde{y}_p\in
Y^{\circ}_p$, we see that $Y^{\circ}_p$ is a nonempty closed subset
of $U_{p}$ defined over $\widetilde{K}_p$. Furthermore, $\dim
Y^{\circ}_p=n-l=2N$. Therefore, $F(Y^{\circ}_p)$ is a
$2N$-dimensional nonempty closed subset of $V_{p}$. On the other
hand, it is immediate from the definition of $F$ and our discussion
in (\ref{4.3}) that  $F(Y_p^\circ)\subseteq {\mathbb
A}^{2N}_{\mathbb{K}_p}\times {\rm pt}$. This implies that
$F(Y_p^\circ)$ is $\widetilde{K}_p$-isomorphic to a Zariski open
subset of ${\mathbb A}^{2N}_{\mathbb{K}_p}$ defined over
$\widetilde{K}_p$. Since $Y_p$ is $\widetilde{K}_p$-isomorphic to
$G_{p}/T_p^{\,{\rm gen}}$ by our discussion in (\ref{4.3}) and
$F(Y_p^\circ)$ is $\widetilde{K}_p$-isomorphic to $Y_p^\circ$ , we
conclude that the homogeneous space $G_{p}/T_p^{\,{\rm gen}}$ is
rational over $\widetilde{K}_p$.
\end{proof}
\subsection{} In order to adapt the proof of the crucial Theorem~6.3 from [\cite{CT}]
to our modular setting we need a smooth projective model of
$G_{p}/T_p^{\,{\rm gen}}$ defined over $\widetilde{K}_p$, that is, a
smooth projective $\widetilde{K}_p$-variety $Y^c_p$ together with an
open embedding $G_{p}/T_p^{\,{\rm gen}}\hookrightarrow Y^c_p$
defined over $\widetilde{K}_p$.
\begin{prop}\label{proj}
For all $p\gg 0$ the variety $G_{p}/T_p^{\,{\rm gen}}$ has a smooth
projective model defined over $\widetilde{K}_p$.
\end{prop}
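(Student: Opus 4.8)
The statement asserts the existence of a smooth projective model of $G_p/T_p^{\,\rm gen}$ over $\widetilde{K}_p$. The first thing to observe is that $G_p/T_p^{\,\rm gen}$ is a smooth variety defined over $\widetilde{K}_p$; the main issue is \emph{projective completion} — we must embed it as an open dense subset of a smooth projective $\widetilde{K}_p$-variety. In characteristic $0$ this is immediate from Hironaka's resolution of singularities: take any projective completion, then resolve. In characteristic $p$ one cannot invoke Hironaka, and this is precisely where the hypothesis $p\gg 0$ enters: the plan is to produce the model by a spreading-out argument from the characteristic $0$ situation, exactly as one passes from $\g$ to $\g_\k$ in the proof of Theorem~\ref{GK mod p}.

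First I would fix, once and for all over $\Z[1/d]$ for a suitable nonzero $d\in\Z$, a model of the whole package: the group scheme $\mathbf{G}$ over $\Z[1/d]$ with generic fibre $G$ and special fibres $G_\k$, the closed subgroup scheme $\mathbf{T}^{\rm gen}$ cut out by the centralizer conditions coming from $\widetilde y$ (whose coefficients $X_i,X_\alpha$ we adjoin, so that the base is $R:=\Z[1/d][X_i,X_\alpha\mid\alpha\in\Phi]$ localized further as needed), and the quotient $\mathbf{G}/\mathbf{T}^{\rm gen}$ over the fraction field of $R$. Over the characteristic $0$ point this recovers $G_{\widetilde K}/T^{\rm gen}_{\widetilde K}$, which by Hironaka admits an open immersion into a smooth projective $\widetilde K$-variety $\mathbf{Y}^c$. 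The key step is then to \emph{spread out} this completion: the variety $\mathbf{Y}^c$, the open immersion, the smoothness and the properness are all encoded by finitely many polynomial equations and open conditions with coefficients in some finitely generated $\Z$-subalgebra $A$ of $\overline{\Q}$, and — after inverting finitely many more primes, i.e.\ enlarging $d$ — over a scheme $\operatorname{Spec}A$ with $A\subset\O_K[d^{-1}]$ for a number field $K$. Smoothness and properness are open conditions on the base (by [\cite{Bo}] or standard EGA, which hold in any characteristic), so they persist on a dense open subscheme of $\operatorname{Spec}A$; shrinking $\operatorname{Spec}A$ we get a smooth proper $A$-scheme $\mathcal{Y}^c$ containing $\mathbf{G}_A/\mathbf{T}^{\rm gen}_A$ as a dense open subscheme.

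Now I would argue, exactly as in part~(B) of the proof of Theorem~\ref{GK mod p}, that since $\operatorname{Spec}(\O_K)\to\operatorname{Spec}(\Z)$ is surjective, for every prime $p$ with $p\nmid d$ there is a closed point of $\operatorname{Spec}A$ above $p$; base changing $\mathcal{Y}^c$ to the residue field and then to $\k=\overline{\mathbb F}_p$, and adjoining the generic parameters $X_i,X_\alpha$, yields a smooth projective $\widetilde{K}_p$-variety $Y^c_p$ together with an open immersion $G_p/T^{\rm gen}_p\hookrightarrow Y^c_p$ defined over $\widetilde{K}_p$. The constraint ``$p\gg 0$'' is simply $p\nmid d$ together with whatever primes were already excluded in Sections~\ref{sec1}–\ref{sec2}; this is harmless since all our statements are asserted for almost all $p$.

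The main obstacle is making the spreading-out argument fully rigorous, i.e.\ checking that the model $\mathbf{Y}^c/\widetilde K$ descends to a finitely generated ring and that smoothness and properness of the total space, as well as \emph{density} of the open $\mathbf{G}/\mathbf{T}^{\rm gen}$ in it, are constructible (hence generic) conditions on the base. The density point deserves a word: the complement $\mathbf{Y}^c\setminus(\mathbf{G}/\mathbf{T}^{\rm gen})$ is a closed subscheme, and the condition that its dimension in every fibre be strictly less than $\dim(G/T^{\rm gen})=n-l$ is open on the base by upper semicontinuity of fibre dimension (Chevalley), so after shrinking it holds on all the special fibres of interest. I expect this to be routine given the machinery of EGA~IV, and the only care needed is in bookkeeping the finitely many inverted primes; no new idea beyond the reduction technique already used in this paper is required.
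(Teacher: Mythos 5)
Your overall route coincides with the paper's: realise $G_{p}/T_p^{\,\rm gen}$ as the fibre $Y_p$ of the adjoint quotient map, apply Hironaka to the characteristic-zero fibre $Y=G\cdot\widetilde{y}$ to obtain a smooth projective completion $\omega\colon Y\into Y^c$, spread everything out over a finitely generated $\Z$-algebra, and reduce modulo $p$. Two points, however, are not right as written.

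First, the bookkeeping of the base. The compactification $Y^c$ is defined over $\widetilde{K}=\mathbb{Q}(X_1,\ldots,X_l)(X_\alpha\,|\,\alpha\in\Phi)$, a purely transcendental extension of $\mathbb{Q}$; its defining data cannot be placed in a finitely generated $\Z$-subalgebra $A$ of $\overline{\mathbb Q}$, nor in ${\mathcal O}_K[d^{-1}]$ for a number field $K$, and specialising at a \emph{closed} point of ${\rm Spec}(A)$ would kill the transcendental parameters and with them the genericity of $\widetilde{y}_p$, i.e.\ $T_p^{\,\rm gen}$ would no longer be the generic torus. (Your opening sentence sets the base correctly as a localisation of $\Z[1/d][X_i,X_\alpha]$, but the subsequent appeal to ${\rm Spec}({\mathcal O}_K)\to{\rm Spec}(\Z)$, imported from part (B) of the proof of Theorem~\ref{GK mod p}, contradicts it.) The correct move is to take a finitely generated $\Z$-subalgebra $\widetilde{R}\subset\widetilde{K}$ with fraction field $\widetilde{K}$ and to specialise at the (non-closed) point $s\in{\rm Spec}(\widetilde{R})$ with residue field ${\mathbb F}_p(X_1,\ldots,X_l)(X_\alpha\,|\,\alpha\in\Phi)$, as the paper does.

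Second, and more seriously, the technical heart of the proposition is showing that the reduction $\omega_p\colon Y_p\to Y_p^c$ of the open immersion $\omega$ is again an open immersion. Your density argument (upper semicontinuity of fibre dimension applied to the complement) shows only that $\omega_p$ is dominant; it does not show that it is injective, let alone an immersion. You gesture at EGA~IV, where spreading out of open immersions is indeed available, but you never name this as the difficulty, and your explicit checklist (smoothness, properness, density) omits it. The paper's proof is devoted almost entirely to this step: it controls the fibre dimensions of the graph $\Gamma_{\widetilde{\omega}}$ over $\mathcal{Y}$ via the fibre-dimension theorem [\cite{Eis}, Thm.~14.8] to make $\omega_p$ quasi-finite, notes that it is birational onto the smooth (hence normal) $Y_p^c$, and then invokes Zariski's Main Theorem to conclude it is an open embedding; it also has to pass to the reduced scheme and select the right irreducible component of $\mathcal{Y}^c_s\times_{{\rm Spec}(\kappa(s))}{\rm Spec}(\mathbb{K}_p)$, a point your sketch does not see. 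Without some version of this step the argument is incomplete.
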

\begin{proof}
Let $\varphi\colon\,\g\to\g/\!\!/ G$ be the adjoint quotient map and
set $Y:=\varphi^{-1}(\widetilde{y})$. Arguing as in (\ref{4.3}) we
observe that $Y=G\cdot\widetilde{y}$ is a smooth variety and the
defining ideal of $Y$ is generated by the regular functions
$\varphi_i-\varphi_i(\widetilde{y})$, where $1\le i\le l$. Our
discussion in (\ref{4.1}), (\ref{4.2}) and (\ref{4.3}) now shows
that there exists a finitely generated $\Z$-subalgebra $R$ of
$\widetilde{K}=K(\mathcal{T})$ and an affine flat scheme $\mathcal
Y$ of finite type over $S:=\,{\rm Spec}(R)$ such that
$$Y=\,{\mathcal Y}\times_{S}\,{\rm Spec}(\mathbb{K})\  \,\mbox{
and }\ Y_p=\,{\mathcal Y}\times_{S}\,{\rm Spec}(\mathbb{K}_p)\qquad\
(\forall\,p\gg 0).$$

By Hironaka's theorem on resolution of singularities there exists a
smooth projective $K(\mathcal{T})$-variety $Y^c\subseteq {\mathbb
P}^d_{\mathbb K}$ and an open immersion $\omega\colon\, Y\to Y^c$
defined over $K(\mathcal{T})$.  Let $\Gamma_\omega$ denote the graph
of $\omega$. Since all projective schemes are separated,
$\Gamma_\omega\,=\,\{(y,\omega(y))\,|\,\,y\in  Y\}$ is a closed
subset of $Y\times {\mathbb P}_{\mathbb K}^d$; see [\cite{Sh2},
p.~47]. As $K(\mathcal{T})$ is a perfect field, $\Gamma_\omega$ is
defined over $K(\mathcal{T})$; see [\cite{Bo}, AG, \S~14] for
detail.

Let $\widetilde{R}$ be a finitely generated $\Z$-subalgebra of
$K(\mathcal{T})$ containing $R$ and all elements which we need to
define $\omega$, $Y^c$, $\Gamma_\omega$, and the field isomorphism
$K(Y)\stackrel{\sim}{\to} K(Y^c)$ induced by the rational inverse of
the comorphism $\omega^*$. Then we obtain a projective scheme
${\mathcal Y}^c$ of finite type over $\widetilde{S}:=\,{\rm
Spec}(\widetilde{R})$ and an $\widetilde{S}$-morphism
$\widetilde{\omega}\colon\, \mathcal{Y}\to\mathcal{Y}^c$ whose base
change to ${\rm Spec}(\widetilde{K})$ is $\omega\colon\,Y\to Y^c$.
We also obtain an $\widetilde{S}$-subscheme
$\widetilde{S}$-subscheme $\Gamma_{\widetilde{\omega}}$ of
${\mathcal Y}\times_{\widetilde{S}} {\mathbb P}^d_{\widetilde{R}}$
such that
$\Gamma_\omega=\,\Gamma_{\widetilde{\omega}}\times_{\widetilde{S}}\,{\rm
Spec}(K(\mathcal{T})). $ By localising further as necessary we may
assume that the scheme $\mathcal{Y}^c$ is smooth over
$\widetilde{S}$ and the schemes $\mathcal Y$, ${\mathcal Y}^c$ and
$\Gamma_{\widetilde{\omega}}$ are flat over $\widetilde{S}$. We let
$\pi\colon \Gamma_{\widetilde{\omega}}\to \mathcal{Y}$ denote the
first projection.

Given a closed point $s\in \widetilde{S}$ and an
$\widetilde{S}$-scheme $\mathcal V$ we write $\kappa(s)$ for the
residue field of the local ring of $s$ and ${\mathcal V}_s$ for the
scheme-theoretical fibre $\mathcal{V}\times_{\widetilde{S}}\,{\rm
Spec}(\kappa(s))$. It follows from the above discussion that for
every closed point $s\in\widetilde{S}$ the schemes ${\mathcal Y}_s$
and $\mathcal{Y}^c_s$ are smooth and the base change
$\widetilde{\omega}_s\colon\,\mathcal{Y}_s \to\mathcal{Y}^c_s$ is
birational.

If $A$ is the affine coordinate ring of $\mathcal Y$, then
$\Gamma_\omega\subseteq \mathcal{Y}\times
\mathbb{P}^d_{\widetilde{R}}$ corresponds to a graded $A$-algebra
$B=B_0\oplus B_1\oplus B_2\oplus\ldots$ with $B_0=A$ generated over
$A$ by $d+1$ elements. By [\cite{Eis}, Thm.~14.8], there is an ideal
$J$ of $A$ such that for every prime ideal $P$ of $A$ the algebra
{\sl Frac}$\,(A/P)\otimes_{A}B$ has positive Krull dimension if and
only if $P\supseteq J$. We denote by $\mathcal{Y}'$ the closed
subscheme of $\mathcal{Y}$ corresponding to the ideal $J$. Then for
every closed point $s\in\widetilde{S}$  we have that $x\in
\mathcal{Y}'_s$ if and only if the fibre $\pi_s^{-1}(x) $ of the
base change $\pi_s\colon\,(\Gamma_{\widetilde{\omega}})_s\to
\mathcal{Y}_s$ has positive dimension.

Set $Y'\,:=\,\mathcal{Y}'\times_{\widetilde{S}}\,{\rm
Spec}(\mathbb{K})$. Since $\Gamma_\omega$ is closed and $\omega$ is
injective, the set $Y'_{\rm red}(\mathbb{K})$ is empty. In
conjunction with Hilbert's Nullstellensatz this implies that the
ideal $J\otimes_{\widetilde{R}} \widetilde{K}$ of
$A\otimes_{\widetilde{R}}\widetilde{K}=\,\widetilde{K}[Y]$ coincides
with $\widetilde{K}[Y]$. Then $\sum_{i=1}^kc_iq_i=1$ for some
$q_1,\ldots, q_k\in J$ and $c_1,\ldots,c_k\in\widetilde{K}=\,\,${\sl
Frac}$\,\,\widetilde{R}$. Localising $\widetilde{R}$ further, we may
assume that all $c_i$'s are in $\widetilde{R}$. Then the above
discussion shows that for every closed point $s\in\widetilde{S}$ the
reduced fibres of $\widetilde{\omega}_s\colon\,\mathcal{Y}_s\to
\mathcal{Y}^c_s$ are finite.

Since $\widetilde{R}$ is a Noetherian domain whose field of
fractions is $\widetilde{K}\,=\,K(X_\alpha\,|\,\,\alpha\in\Phi)$,
for every $p\gg 0$ there exists $s\in{\rm Spec}(\widetilde{R})$ with
$\kappa(s)\,=\,{\mathbb F}_p(X_1,\ldots,
X_l)\big(X_\alpha\,|\,\,\alpha\in\Phi\big)$. The discussion in
(\ref{4.3}) shows that for each such $s$ the scheme
$\mathcal{Y}_s\times_{{\rm Spec}(\kappa(s))}\,{\rm
Spec}(\mathbb{K}_p)$ is nothing but $Y_p$. Since $Y_p$ is reduced,
the base change
$\widetilde{\omega}_s\colon\,\mathcal{Y}_s\to\mathcal{Y}_s^c$ gives
rise to a natural morphism $\omega_p\colon\,Y_p\to
\big(\mathcal{Y}_s^c\times_{{\rm Spec}(\kappa(s))}\,{\rm
Spec}(\mathbb{K}_p)\big)_{\rm red}$. We denote by $Y_p^c$ the
irreducible component of the reduced scheme
$\big(\mathcal{Y}_s^c\times_{{\rm Spec}(\kappa(s))}\,{\rm
Spec}(\mathbb{K}_p)\big)_{\rm red}$ that contains $\omega_p(Y_p)$.

Since $\mathcal{Y}_s^c$ is smooth, projective, so too is $Y_p^c$.
Furthermore, our earlier remarks in the proof imply that $\omega_p$
is a $\widetilde{K}_p$-defined birational morphism of algebraic
varieties and all fibres of $\omega_p$ are finite. The variety
$Y_p^c$ is smooth, hence normal. Applying Zariski's Main Theorem to
the quasi-finite birational morphism $\omega_p\colon\,Y_p\to Y_p^c$,
we now deduce that $\omega_p$ is an open embedding; see [\cite{N},
Cor.~1(i)] for the statement and a short proof of the result we need
(it is worth mentioning that [\cite{N}] is available on the web).

Since the variety $Y_p$ is defined over $\widetilde{K}_p$ by our
discussion in (\ref{4.3}), it follows from [\cite{Bo}, AG, 14.5]
that so is $Y_p^c=\,\overline{\omega_p(Y_p)}$. But then the
composite $G_{p}/T_p^{\,{\rm
gen}}\stackrel{\sim}{\to}Y_p\stackrel{\omega_p}\too Y_p^c$ is a
smooth projective model of $G_{p}/T_p^{\,{\rm gen}}$ defined over
$\widetilde{K}_p=K_p(\mathcal{T}_p)$.
\end{proof}
\begin{rem}\label{div}
Since the variety $Y^c_p$ is projective and its open set
$\omega_p(Y_p)\cong Y_p$ is affine, all irreducible components of
the complement $D\,:=\,Y^c_p\setminus \omega_p(Y_p)$ have
codimension $1$ in $Y^c$; see [\cite{Ha}, Ch.~2].
\end{rem}
\begin{rem}
Let $\bf T$ be a maximal $F$-torus in a split connected reductive
algebraic $F$-group $\bf G$. If $\bf T$ is $F$-split, then there
exist Borel subgroups ${\bf B}_+$ and ${\bf B}_-$ in $\bf G$ defined
over $F$ and such that ${\bf B}_+\cap {\bf B}_-={\bf T}$. Thus, in
the $F$-split case the variety $({\bf G}/{\bf B}_{+}) \times ({\bf
G}/{\bf B}_{-})$ provides a natural smooth projective model of the
homogeneous space ${\bf G}/{\bf T}$ (this was pointed out to me by
Panyushev and Serganova). Unfortunately, it is not clear how to
adapt this construction to the case of a non-split maximal torus. It
would be very interesting to find an {\it explicit} $F({\bf
T})$-defined smooth projective model of the homogeneous space ${\bf
G}/{\bf T}$ for an arbitrary maximal $F$-torus $\bf T$ of $\bf G$.
\end{rem}
\subsection{}\label{4.6.}
It is known that in characteristic $0$ the generic torus $T^{\rm
gen}\subset G$ splits over a finite Galois extension of
$\widetilde{K}$ whose group acts on the weight lattice $X(T^{\rm
gen})$ as the Weyl group $W$. This result is sometimes attributed to
{\'E}. Cartan; see [\cite{C1}, \cite{C2}]. Modern proofs can be
found in [\cite{Vo1}, \cite{Vo2}] and in the ``dismissed appendix''
to [\cite{CT}] written by J.-L. Colliot-Th{\'e}l{\`e}ne; see
[\cite{CT1}].

The rest of the paper relies on a modular version of this result. In
order to apply the arguments from [\cite{Vo1}, \cite{Vo2}] in the
characteristic $p$ case one needs to know that the morphism
$\alpha\colon\,G/T\times T\to H$ is separable and the second
projection $\pi\colon\,H\to Y$ is birational (notation of {\it
loc.\,cit.}). This was checked earlier by Vladimir Popov and Andrei
Rapinchuk. The proof below was outlined to the author by Andrei
Rapinchuk.
\begin{prop}\label{galois}
There exists a finite Galois extension $L/\widetilde{K}_p$ with
group $W$ which splits the $\widetilde{K}_p$-torus $T^{\rm gen}_p$
and  acts on the weight lattice of $T^{\rm gen}_p$ in the standard
way.
\end{prop}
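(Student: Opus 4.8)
The plan is to carry the classical theorem of {\'E}.~Cartan and Voskresenski$\breve{\i}$ over to characteristic $p$, the only genuinely modular ingredients being the separability statements recorded before the proposition (verified by Popov and Rapinchuk). Let $T_p\subset G_{p}$ be the maximal torus with $\Lie(T_p)=\t_p$, and recall from~(\ref{4.2}) that the variety $\mathcal{T}_p$ of maximal toral subalgebras of $\g_{p}$ is isomorphic to $G_{p}/N_p$. First I would check that, for $p\gg 0$, one has $Z_{G_p}(\t_p)=T_p$ and $N_p=N_{G_p}(\t_p)=N_{G_p}(T_p)$, so that $N_p/T_p$ is the constant group scheme $W$; since the quotient morphisms $G_p\too G_p/T_p$ and $G_p\too G_p/N_p$ are smooth, the induced map $\rho\colon\,G_p/T_p\too\mathcal{T}_p$ is then finite {\'e}tale. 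The right translation action of $W=N_p/T_p$ on $G_p/T_p$ is free with quotient $\mathcal{T}_p$, so $\rho$ is a $W$-torsor. This is exactly where $p\gg 0$ enters: for small $p$ the scheme $N_{G_p}(\t_p)/T_p$ may be larger than $W$, or $\rho$ may fail to be separable.

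Next I would pass to function fields. As $G_p$ is connected, $G_p/T_p$ is a smooth, geometrically irreducible $K_p$-variety, so $L:=K_p(G_p/T_p)$ is a field, finite and separable over $\widetilde{K}_p=K_p(\mathcal{T}_p)$ of degree $|W|$ (the identification $\widetilde{K}_p=K_p(\mathcal{T}_p)$ being the one set up in~(\ref{4.2})); and because $\rho$ is a $W$-torsor, $L/\widetilde{K}_p$ is Galois with group $W$. It then remains to prove that $L$ splits $T^{\rm gen}_p$ and that $\mathrm{Gal}(L/\widetilde{K}_p)=W$ acts on the weight lattice $X(T^{\rm gen}_p)$ in the standard way; since $W$ acts faithfully on the weight lattice of a semisimple group, this automatically forces $L$ to be the \emph{minimal} splitting field.

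To see that $L$ splits $T^{\rm gen}_p$, recall from~(\ref{4.1})--(\ref{4.2}) that $\t^{\rm gen}_p=\Lie(T^{\rm gen}_p)$ is the generic point of $\mathcal{T}_p$, that $T^{\rm gen}_p=Z_{G_p}(\widetilde{y}_p)$, and hence $Z_{G_p}(\t^{\rm gen}_p)=T^{\rm gen}_p$. Pulling $\rho$ back along the generic point $\t^{\rm gen}_p\colon\,{\rm Spec}(\widetilde{K}_p)\too\mathcal{T}_p$ yields the $W$-torsor ${\rm Spec}(L)\too{\rm Spec}(\widetilde{K}_p)$, which becomes trivial over $L$; this provides an $L$-point of $G_p/T_p$ lying over $\t^{\rm gen}_p$. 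As $T_p$ is $K_p$-split, $H^1(L,T_p)=0$ by Hilbert's Theorem~90, so this point lifts to some $g\in G_p(L)$, and then $(\Ad g)(\t_p\otimes_{K_p}L)=\t^{\rm gen}_p\otimes_{\widetilde{K}_p}L$. Conjugation by $g$ therefore carries $T_p\otimes L=Z_{G_p}(\t_p)\otimes L$ isomorphically onto $Z_{G_p}(\t^{\rm gen}_p)\otimes L=T^{\rm gen}_p\otimes L$, so $T^{\rm gen}_p$ splits over $L$. For the action on characters, I would apply $\sigma\in\mathrm{Gal}(L/\widetilde{K}_p)=W$ to $g$: the lifts $g$ and $\sigma(g)$ of the same $\widetilde{K}_p$-rational point of $\mathcal{T}_p$ differ by the free right $W$-action, so the cocycle $\sigma\mapsto \mathrm{Int}(g^{-1}\sigma(g))$ describing the twist of $T_p$ into $T^{\rm gen}_p$ is conjugation by a representative of the corresponding element $w_\sigma\in W$, which acts on $X(T_p)$ exactly by $w_\sigma$. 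Hence the $W$-action on the weight lattice of $T^{\rm gen}_p$ is the standard one, and $L$ is the required Galois extension.

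The main obstacle is the characteristic-$p$ input isolated in the first paragraph: one must verify that $N_{G_p}(\t_p)/T_p$ is the constant group $W$ and that $\rho$ is finite {\'e}tale --- equivalently, the separability of Voskresenski$\breve{\i}$'s morphism $\alpha\colon\,G/T\times T\to H$ and the birationality of his projection $\pi\colon\,H\to Y$ --- which genuinely fails for small primes, whence the restriction $p\gg 0$. A subsidiary point that needs care is the bookkeeping in the last step: one has to match the abstract $W$-action on the torsor $\rho$ with the standard action of $W$ on the root datum, through the Galois descent data for the twisted torus $T^{\rm gen}_p$.
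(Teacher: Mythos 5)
Your proposal is correct, and it reaches the conclusion by a genuinely different route from the paper. The paper follows Voskresenski$\breve{\i}$'s construction: it introduces the incidence variety $H_p=\{(gN_{p},gtg^{-1})\}$, proves separability of the second projection $\pi_2\colon H_p\to G_p$ by an explicit tangent-space computation at $(eT_p,t_0)$ with $t_0$ regular, invokes Zariski's Main Theorem to identify $H_p^{\rm rs}$ with $G_p^{\rm rs}$, and deduces that $\alpha\colon (G_{p}/T_{p})\times T_{p}^{\rm rs}\to H_p^{\rm rs}$ is the geometric quotient by $W$; the Galois property of $L=K_p(G_{p}/T_{p})$ over $\widetilde{K}_p$ is then read off from the fibre of $\beta$ over the generic point. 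For the splitting statement the paper characterises splitting fields of $T^{\rm gen}_p$ via Borel--Tits conjugacy of split tori (a Galois extension $P$ splits $T^{\rm gen}_p$ iff $\widetilde{y}_p\in\beta((G_{p}/T_{p})(P))$), and for the standard action it combines minimality of $L$, faithfulness of the Galois action on $X(T^{\rm gen}_p)$, and Tits's theorem that the image lies in $W$ for a split group. You instead take as your starting point the $W$-torsor structure of $G_{p}/T_{p}\to G_{p}/N_{p}$ (finite \'etale because $N_{p}/T_{p}$ is the constant group $W$ once $Z_{G_p}(\t_p)=T_p$ and $N_{G_p}(\t_p)=N_{G_p}(T_p)$, which holds for $p\gg0$), split $T^{\rm gen}_p$ by lifting an $L$-point of the generic fibre through $H^1(L,T_{p})=0$, and extract the standard action from the cocycle $\sigma\mapsto g^{-1}\sigma(g)$. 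Your route is shorter and more cohomological, concentrating the characteristic-$p$ input into the one group-scheme-theoretic assertion about $N_p/T_p$, which plays the role of the paper's separability computation; the paper's route makes the separability issue completely explicit (this is the step the paper attributes to Popov and Rapinchuk) and sidesteps the cocycle bookkeeping --- which you rightly flag as the delicate point of your argument --- by deriving the $W$-action abstractly from minimality of the splitting field together with Tits's classification. Both arguments hinge on the same identification $L=K_p(G_{p}/T_{p})$, and both give that $L$ is in fact the minimal splitting field.
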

\begin{proof}
\noindent (1) Following [\cite{Vo1}] we set
$$H_p:=(gN_{p},gtg^{-1})\,|\,\,g\in G_{p},\,t\in T_{p}\}\subset\,(G_{p}/N_{p})\times G_{p}.$$
By [\cite{Hu1}, p.~10], the set $H_p$ is Zariski closed in
$(G_{p}/N_{p})\times G_{p}$. By the definition of  $H_p$, the
$K_p$-morphism $\alpha\colon\,(G_{p}/T_{p})\times T_{p}\too H_p$
taking $(gT_{p},t)$ to $(gN_{p},gtg^{-1})$ is surjective.  We can
write $\alpha$ as the composition $\alpha_2\circ\alpha_1$, where
\begin{eqnarray*}
\alpha_1\colon\,G_{p}\times T_{p} &\too& (G_{p}/T_{p}) \times G_{p},
\qquad\ \, (g,t) \mapsto (gT_{p},
gtg^{-1});\\
\alpha_2\colon\,(G_{p}/T_{p}) \times G_{p} &\too&
(G_{p}/N_{p})\times G_{p},\  \qquad (gT_{p},x) \mapsto (gN_{p},x).
\end{eqnarray*}
The morphism $\alpha_2$ is an {\'e}tale Galois cover. We need to
show that the second projection $\pi_2\colon\,H_p\to G_{p}$ is a
separable morphism. Define $\alpha_3\colon\,(G_{p}/T_{p})\times
T_{p}\too G_{p}$ to be the composite of $\alpha_1$ and $\pi_2$. Then
$\alpha_3(gT_{p},t)=gtg^{-1}$ for all
$(gT_{p},t)\in(G_{p}/T_{p})\times T_{p}$.

We compute the differential of $\alpha_3$ at $(eT_{p},t_0)\in
(G_{p}/T_{p})\times T_{p}$, where $e$ is the identity element of
$G_{p}$ and $t_0$ is any regular element of $T_{p}$. Write
$D=\K[\varepsilon]$ for the algebra of double numbers over $\K$, so
that $\varepsilon^2=0$. Let $X$ be in the $\K$-span of the
$e_{\gamma}$'s which we identify with $\g_{p}/\t_{p}$, the tangent
space of $G_{p}/T_{p}$ at $eT_{p}$. If $Y\in \Lie(T_{p})=\t_{p}$,
then $t_0Y$ lies in the tangent space to $T_{p}$ at $t_0$ and $({\rm
d}\alpha_3)_{(eT_{p},\,t_0)}(X,t_0Y)$ is the coefficient of
$\varepsilon$ of the element $(e + \varepsilon X)(t_0 + \varepsilon
t_0Y)(e - \varepsilon X) = t_0 + \varepsilon(t_0Y + Xt_0 - t_0X)\in
D.$ Multiplying this by $t_0^{-1}$ to move everything back to the
identity, we get
$$({\rm
d}\alpha_3)_{(eT_{p},\,t_0)}(X,t_0Y)\,=\,Y + t_0^{-1}Xt_0 - X = Y +
\big({\rm Ad}(t_0^{-1})-{\rm Id}\big)(X).$$ Since $t_0\in T_{p}$ is
regular, we see that the image of $({\rm d}\alpha_3)_{(X,\,t_0Y)}$
has dimension $n=\dim\, G_{p}$. So $\alpha_3=\alpha_1\circ\pi_2$ is
a separable morphism and hence so is $\pi_2$.

\medskip

\noindent (2)  Write $G^{\rm rs}_{p}$ for the set of all regular
semisimple elements in $G_{p}$, and set $T_{p}^{\rm rs}:=G_{p}\cap
T_{p}$. As the group $G_{p}$ is simply connected it follows from
Steinberg's restriction theorem that there exists a regular
invariant function $f\in\k[G_\k]^{G_\k}$ such that $G_{p}^{\rm
rs}\,=\,\{g\in G_{p}\,|\,\,f(g)\ne 0\}$. Hence $G_{p}^{\rm rs}$ is a
principal Zariski open subset in $G_{p}$. In particular, the
varieties $G_{p}^{\rm rs}$ and $T_{p}^{\rm rs}$ are smooth and
affine.

Let $H^{\rm rs}_p:=\{(gN_{p},gtg^{-1})\,|\,\,g\in G_{p},t\in
T_{p}^{\rm rs}\}$. The Weyl group $W$ acts on $(G_{p}/T_{p})\times
T_{p}$ by the rule
$$(gT_{p},t)^w=\,(g\!\stackrel{\cdot}{w}\!T_{p},
{\stackrel{\cdot}{w}}{}^{-1}t\!\stackrel{\cdot}{w}).$$ It is
straightforward to see that the set $H^{\rm rs}_p$ is $W$-stable,
the restriction of $\pi_2$ to $H^{\rm rs}_p$ is bijective and the
fibres of $\alpha$ are $W$-orbits. Also, $H^{\rm
rs}_p=\big((G_{p}/N_{p})\times G_{p}^{\rm rs}\big)\cap H_p$ is a
principal Zariski open subset of $H_p$.

By part~(1), the restriction of $\pi_2$ to $H^{\rm rs}_p$ is
separable. So $\pi_2\colon H^{\rm rs}_p\to G_{p}^{\rm rs}$ is a
bijective separable morphism of affine varieties. Therefore, it is
birational. As the variety $G_{p}^{\rm rs}$ is smooth, hence normal,
Zariski's Main Theorem now yields that $\pi_2\colon H^{\rm rs}_p\to
G_{p}^{\rm rs}$ is a $K_p$-isomorphism. But the $H^{\rm rs}_p$ is an
affine normal variety, and we can apply [\cite{Bo}, Prop.~II.\,6.6]
to conclude that $\alpha\colon\,(G_p/T_p)\times T^{\rm rs}_{p}\too
H_p^{\rm rs}$ is the geometric quotient for the action of $W$.

We have the following commutative diagram, where $\pi_1$ is the
first projection and $\beta$ is the canonical map.
\begin{eqnarray}\label{cd}
\begin{CD}
(G_{p}/T_{p})\times T^{\mathrm{rs}}_{p} @>\alpha>> H^{\rm rs}_p\\
@Vp_1VV @VV\pi_1V \\
 G_{p}/T_{p}@>\beta>> G_{p}/N_{p}@<\sim<< \mathcal{T}_p,
\end{CD}
\end{eqnarray}
where $p_1$ and $\pi_1$ are the first projections and $\beta$ is the
quotient morphism. All these maps are defined over $\k\subset K_p$.

\medskip

\noindent (3) Recall from (\ref{4.2}) that $\widetilde{y}_p$
identifies with a generic point of $\mathcal{T}_p\cong G_{p}/N_{p}$
in the sense that the fields $K_p(\widetilde{y}_p)=\widetilde{K}_p$
and $K_p(G_{p}/N_{p})$ are $K_p$-isomorphic. The algebra of
$K_p$-defined regular functions of the fibre
$\beta^{-1}(\widetilde{y}_p)$ is
\begin{equation}\label{gg}
K_p[G_{p}/T_{p}]
\otimes_{\,K_p[G_{p}/N_{p}]}\,K_p(G_{p}/N_{p})\,\cong\,
K_p(G_{p}/T_{p}), \end{equation} showing that
$K_p(\beta^{-1}(\widetilde{y}_p))=K_p(G_{p}/T_{p})$ is a Galois
extension of $\widetilde{K}_p=K_p(G_{p}/N_{p})$ with Galois group
$W$.

Since $T^{\rm gen}_p\,=\,Z_{G_{p}}(\widetilde{y}_p)$ is defined over
$\widetilde{K}_p$, it contains a $\widetilde{K}_p$-rational regular
element; we call it $s$. (This follows from the fact that $T^{\rm
gen}_p(\widetilde{K}_p )$ is dense in $T^{\rm gen}_p$; see
[\cite{DG}]). Let $(gN_{p},s)=\pi_2^{-1}(s)$. Since $\pi_2$ is a
$K_p$-isomorphism, we have that $(gN_{p},s)\in H_p^{\rm
rs}(\widetilde{K}_p)$.

If $P\subset \K$ is a finite Galois extension of $\widetilde{K}_p$
which splits $T_p^{\rm gen}$, then the split $P$-tori $T_{p}$ and
$T^{\rm gen}_p$ are conjugate by a $P$-rational element of $G_{p}$;
see [\cite{BoT}, 4.21, 8.2]. Put differently,
$(gN_{p},s)=\alpha(hT_{p},s')$ for some $P$-point $(hT_{p},s')$ of
$(G_{p}/T_{p})\times T_{p}^{\rm rs}$. But then (\ref{cd}) shows that
$\widetilde{y}_p\in \beta((G_{p}/T_{p})(P))$. Conversely, if $L$ is
a finite Galois extension of $\widetilde{K}_p$ such that
$\widetilde{y}_p\in \beta((G_{p}/T_{p})(L))$, then (\ref{cd}) yields
that
$$(gN_{p},s)=\pi_2^{-1}(s)\in
\alpha((G_{p}/T_{p})\times T_{p})(L)).$$ Hence $L$ splits $T_p^{\rm
gen}$. Applying this with $L=K_p(G_{p}/T_{p})$ and taking into
account (\ref{gg}) one can observe that $L=K_p(G_{p}/T_{p})$ is a
minimal splitting field for $T_p^{\rm gen}$ and ${\rm
Gal}(L/\widetilde{K}_p)=W$. Indeed, since $L^W=\widetilde{K}_p$ and
$W$ acts faithfully on $L$, the $L$-algebra $L\,\otimes_{L^W}L$ is
isomorphic to a direct sum of $|W|$ copies of $L$. Moreover, it
follows from the normal basis theorem by comparing $W$-invariants
that if $F/\widetilde{K}_p$ is a Galois extension contained in $L$,
then $L\otimes_{L^W} F$ is isomorphic as an $F$-algebra to a direct
sum $|W|$ copies of $F$ if and only if $F=L$.

By the minimality of the splitting field $L$, the Galois group of
$L/\widetilde{K}_p$ acts faithfully on the weight lattice $X(T^{\rm
gen}_p)$ giving a natural injective group homomorphism
$\tau\colon\,{\rm Gal}(L/\widetilde{K}_p)\to {\rm Aut}(\Phi)$. Since
the group $G_{p}$ is $\widetilde{K}_p$-split, the image of ${\rm
Gal}(L/\widetilde{K}_p)$ under $\tau$ is contained in $W\subseteq
{\rm Aut}(\Phi)$; see [\cite{Ti}, 2.3]. As $\tau$ is injective, this
shows that $W={\rm Gal}(L/\widetilde{K}_p)$ acts on $X(T^{\rm
gen}_p)$ in the standard way.
\end{proof}
\subsection{}
It what follows we shall assume without loss of generality that our
algebraic closure $\K$ of $\widetilde{K}_p=K_p(G_{p}/N_{p})$
contains $L=K_p(G_{p}/T_{p})$. The result below (which is crucial
for us) has been proved in [\cite{CT}] under the assumption that the
base field has characteristic $0$; compare [\cite{CT}, Thm~6.3(b)].
Although it follows from a more general result obtained in
[\cite{CTK}], the proof given in [\cite{CT}] is self-contained
modulo [\cite{Po}, Thm.~4], [\cite{CS}, Prop.~2.1.1] and [\cite{CS},
Prop.~2.A.1].

Recall that a free $\Z$-module of finite rank acted upon by a group
$\Gamma$ is called a {\it permutation lattice} if it has a
$\Z$-basis whose elements are permuted by $\Gamma$.
\begin{prop}\label{lattice}
If the homogeneous space $Y_p=G_{p}/T^{\rm gen}_p$ is
$\widetilde{K}_p$-rational, then there exists a short exact sequence
of $\Gamma$-lattices $$0\too P_2\too P_1\too X(T^{\rm gen}_p)\too
0$$ with $P_1$ and $P_2$ permutation lattices over $\Gamma={\rm
Gal}(L/\widetilde{K}_p)$.
\end{prop}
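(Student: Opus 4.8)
The plan is to deduce this statement from the no-name lemma of Colliot-Th\'el\`ene--Sansuc together with the standard fibration argument for homogeneous spaces of the form $G/T$. The key observation is that $Y_p = G_p/T^{\rm gen}_p$ carries a natural fibration over $G_p/N_p$ with fibre $N_p/T^{\rm gen}_p$, and after base change to $L$ (which splits $T^{\rm gen}_p$ by Proposition~\ref{galois}) the torus $T^{\rm gen}_p$ becomes $W$-equivariantly isomorphic to the standard split maximal torus $T_p$. First I would recall that $Y_p$ is a torsor over $G_p/N_p$ under the group scheme obtained by twisting $T^{\rm gen}_p$; more precisely, one can present $Y_p$ birationally over $\widetilde K_p$ as the quotient $(L\text{-variety})/W$ where the middle object is a torsor under a quasi-split torus. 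The standard way to package this: choose a $W$-equivariant embedding of the character lattice $X(T^{\rm gen}_p)$ into a permutation $W$-lattice $P_1$ (for instance take $P_1 = \Z[W]\otimes_{\Z[W_J]}\Z$-type summands, or simply a large enough permutation lattice surjecting onto $X(T^{\rm gen}_p)$, which always exists since every $\Gamma$-lattice is a quotient of a permutation lattice). Let $P_2$ be the kernel; a priori $P_2$ is only a lattice, and the whole point is to arrange (or deduce) that $P_2$ is also a permutation lattice.

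The engine is the no-name lemma: if $\Gamma$ acts on a vector space $V$ through a permutation representation (i.e., $V$ has a $\Gamma$-stable basis, equivalently $V$ corresponds to a permutation lattice), then for any faithful $\Gamma$-variety $Z$ on which $\Gamma$ acts freely with a geometric quotient, the quotients $(Z\times V)/\Gamma$ and $(Z/\Gamma)\times \mathbb A^{\dim V}$ are stably birational; one applies this with $Z$ the $L$-torsor upstairs and $V$ built from $P_1$. Concretely, I would proceed as in [\cite{CT}, Thm.~6.3]: realize $G_p/T^{\rm gen}_p$ as $(\widetilde{G_p/N_p}\times \mathbb A(P_1/X))/\Gamma$ up to stable birational equivalence, where $\mathbb A(L)$ denotes the affine space attached to a lattice. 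The surjection $P_1\onto X(T^{\rm gen}_p)$ of $\Gamma$-lattices dualizes and induces, via the no-name lemma, a birational description of $G_p/T^{\rm gen}_p\times \mathbb A(P_1)$ as $\mathbb A(P_2)$-bundle over something birational to a point; the rationality hypothesis on $Y_p$ then forces $\mathbb A(P_2)$ over $\widetilde K_p$ to be rational, and by the Endo--Miyata / Colliot-Th\'el\`ene--Sansuc flasque resolution machinery together with [\cite{CS}, Prop.~2.1.1] and [\cite{CS}, Prop.~2.A.1], one concludes that $P_2$ is stably permutation, hence (after possibly enlarging $P_1$ by a permutation summand to absorb the stabilization) that there is an exact sequence $0\to P_2\to P_1\to X(T^{\rm gen}_p)\to 0$ with both $P_1,P_2$ permutation. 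The cited [\cite{Po}, Thm.~4] is used to guarantee the needed freeness/geometric-quotient properties of the $\Gamma$-action in positive characteristic.

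The main obstacle is ensuring that all the ingredients of the Colliot-Th\'el\`ene--Sansuc argument survive in characteristic $p\gg 0$. The original proof in [\cite{CT}] uses versality of $(G,S)$-fibrations and separability/birationality statements that were established there only over a field of characteristic zero (see the footnote on p.~20 of [\cite{CT}]). Here, however, the separability of the relevant action morphism $\alpha$ and the birationality of the projection $\pi_2$ have already been verified in Proposition~\ref{galois}, and the existence of a smooth projective model of $G_p/T^{\rm gen}_p$ over $\widetilde K_p$ is Proposition~\ref{proj}; so the hard analytic inputs are in place. What remains to check carefully is that the no-name lemma and the flasque-resolution dictionary of [\cite{CS}] are purely formal consequences of having a free $\Gamma$-action with geometric quotient (they are, being essentially lattice-theoretic once the geometric quotient exists), so that replacing ``characteristic $0$'' by ``$p\gg 0$'' in the hypotheses of [\cite{CT}, Thm.~6.3] changes nothing in the lattice-theoretic conclusion. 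I would therefore present the proof as: invoke Proposition~\ref{galois} to fix $\Gamma=W={\rm Gal}(L/\widetilde K_p)$ acting in the standard way on $X(T^{\rm gen}_p)$; invoke Propositions~\ref{proj} and \ref{gen} for the geometric setup; run the no-name lemma with a chosen permutation cover $P_1\onto X(T^{\rm gen}_p)$; and quote [\cite{Po}, Thm.~4], [\cite{CS}, Prop.~2.1.1], [\cite{CS}, Prop.~2.A.1] verbatim to extract that the kernel $P_2$ is permutation, exactly as in [\cite{CT}], noting that every step used is characteristic-free once Propositions~\ref{galois}--\ref{proj} are granted.
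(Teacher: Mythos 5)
Your proposal takes a different route from the paper's, and that route has genuine gaps. The paper's proof (following [\cite{CT}, p.~23]) is a Picard-group computation: the smooth projective model $Y_p^c\supset Y_p$ from Proposition~\ref{proj} yields the exact sequence of Galois lattices
$0\to \K[Y_p]^\times/\K^\times\to{\rm Div}_\infty Y_p^c\to{\rm Pic}\,Y_p^c\to{\rm Pic}\,Y_p\to 0$;
one then shows $\K[Y_p]^\times=\K^\times$ and ${\rm Pic}\,Y_p\cong X(T_p^{\rm gen})$ using [\cite{Po}, Thm.~4] and [\cite{CS}, Prop.~2.1.1] (so these citations compute units and Picard groups of the homogeneous space -- they are not about ``freeness of the $\Gamma$-action'' as you suggest), and finally [\cite{CS}, Prop.~2.A.1] applied to the \emph{$\widetilde{K}_p$-rational variety} $Y_p$ gives that ${\rm Pic}\,Y_p^c$ is stably permutation. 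The desired sequence is the resulting $0\to{\rm Div}_\infty Y_p^c\to{\rm Pic}\,Y_p^c\to X(T_p^{\rm gen})\to 0$ after adding a permutation summand. Your Schanuel-type remark that it suffices to exhibit \emph{one} permutation cover with stably permutation kernel, and your stabilization trick at the end, are both fine; what is missing is precisely the construction of that one cover, which is the whole content of the proof and which you replace by ``quote verbatim.''

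Two concrete errors in your mechanism. First, the geometric setup is wrong: $G_p/T_p^{\rm gen}$ does not fibre over $G_p/N_p$ with fibre $N_p/T_p^{\rm gen}$ (the generic torus is not contained in $N_p$), and it is not a torsor over $G_p/N_p$ under a twist of $T_p^{\rm gen}$; the no-name lemma and versal-torsor formalism belong to the earlier reduction steps in [\cite{CT}] (the passage to the generic torus), not to the proof of the lattice statement itself. Second, and more seriously, the implication you rely on -- that rationality of the relevant ($P_2$-)object forces $P_2$ to be stably permutation -- is false in the form you use it: for a torus, stable rationality is equivalent to triviality of the \emph{flasque class} of its character lattice, not to the lattice itself being stably permutation (norm-one tori of cyclic extensions give counterexamples), and ``$\mathbb{A}(P_2)$ is rational over $\widetilde{K}_p$'' is vacuous since an affine space is always rational. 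The correct bridge from the rationality hypothesis to lattice theory is [\cite{CS}, Prop.~2.A.1] applied to the smooth compactification $Y_p^c$, together with the identification ${\rm Pic}\,Y_p\cong X(T_p^{\rm gen})$; without these two steps the proof does not close.
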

\begin{proof}
The proof repeats almost verbatim the argument in [\cite{CT},
p.~23]; we sketch it for the convenience of the reader.

By Proposition~\ref{proj}, the variety $Y_p$ has a smooth projective
model $Y_p^c$ defined over $\widetilde{K}_p$. The open immersion
$Y_p\subset Y_p^c$ gives rise to an exact sequence of Galois
lattices
\begin{equation}\label{short}
0\too \K[Y_p]^{\times}/\K^{\times}\too{\rm Div}_{\infty}\, Y_p^c\too
{\rm Pic}\,Y_p^c\too {\rm Pic}\,Y_p\too 0,\end{equation} where ${\rm
Div}_\infty\,Y_p^c$ is the free abelian group on the irreducible
components of the exceptional divisor $D=Y_p^c\setminus Y_p$; see
Remark~\ref{div}. Since [\cite{Po}, Thm.~4] and [\cite{CS},
Prop.~2.1.1] hold in any characteristic, we can repeat the argument
in [\cite{CT}, p.~23] to obtain that $\K[Y_p]^\times=\,\K^\times$
and ${\rm Pic}\,Y_p\,\cong\,X(T_p^{\rm gen})$ as Galois modules.

Since [\cite{CS}, Prop.~2.A.1] hold in any characteristic, we can
apply it to the $\widetilde{K}_p$-rational homogeneous space $Y_p$
to deduce that the Galois lattice ${\rm Pic}\,Y_p^c$ has the
property that $Q_1\oplus {\rm Pic}\,Y_p^c\,\cong\, Q_2$ for some
permutation Galois lattices $Q_1$ and $Q_2$. As $P:={\rm
Div}_\infty\,Y_p^c$ is a permutation Galois lattice as well, the
short exact sequence
$$0\too P\too {\rm Pic}\,Y_p^c\too X(T_{p}^{\rm gen})\too 0$$
induced by (\ref{short}) gives rise to a short exact sequence $0\to
P_2\to P_1\to X(T_{p}^{\rm gen})\to 0$ with $P_2=P\oplus Q_1$ and
$P_1=Q_1\oplus {\rm Pic}\,Y_p^c$ being permutation Galois lattices.
\end{proof}
We denote by $P(\Phi)$ the weight lattice of the root system $\Phi$.
\begin{corollary}\label{W}
If the Gelfand--Kirillov conjecture holds for $\g$, then there
exists a short exact sequence of $W$-modules
\begin{equation}\label{last}
0\too P_2\too P_1\too P(\Phi)\too 0 \end{equation} with $P_1$ and
$P_2$ permutation $W$-lattices.
\end{corollary}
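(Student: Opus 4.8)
The plan is to chain together the results established in this section with Theorems~\ref{GK mod p} and \ref{pur} from the earlier sections. First I would assume the Gelfand--Kirillov conjecture holds for $\g$ and fix a prime $p$ large enough that every ``$p\gg 0$'' hypothesis occurring below is in force; since only finitely many such conditions are involved --- those of Theorems~\ref{GK mod p} and \ref{pur}, of the Chevalley restriction and Veldkamp theorems, and of Propositions~\ref{gen}, \ref{proj} and \ref{galois} --- this imposes no real constraint. By Theorem~\ref{GK mod p} the conjecture then holds for $\g_\k$, and Theorem~\ref{pur} together with Remark~\ref{r2} shows that $\k(\g_\k)$ is purely transcendental over $\k(\g_\k)^{G_\k}$, equivalently that $\k(\g_\k^*)$ is purely transcendental over $\k(\g_\k^*)^{G_\k}$.

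Next I would invoke Proposition~\ref{gen} to conclude that $G_p/T_p^{\,{\rm gen}}$ is $\widetilde{K}_p$-birational to an affine space, hence $\widetilde{K}_p$-rational; Proposition~\ref{proj} provides the smooth projective model over $\widetilde{K}_p$ that Proposition~\ref{lattice} requires. Applying Proposition~\ref{lattice} to $Y_p=G_p/T_p^{\,{\rm gen}}$ then yields a short exact sequence of $\Gamma$-lattices $0\to P_2\to P_1\to X(T_p^{\,{\rm gen}})\to 0$ in which $P_1$ and $P_2$ are permutation lattices over $\Gamma={\rm Gal}(L/\widetilde{K}_p)$.

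It remains to transport this sequence along the canonical identifications $\Gamma\cong W$ and $X(T_p^{\,{\rm gen}})\cong P(\Phi)$. By Proposition~\ref{galois} the extension $L/\widetilde{K}_p$ has Galois group $W$, acting on the character lattice of $T_p^{\,{\rm gen}}$ in the standard way via the injection $\tau$; and since $G_p$ is simply connected this character lattice is precisely the weight lattice $P(\Phi)$, so $X(T_p^{\,{\rm gen}})\cong P(\Phi)$ as $W$-modules. As a group isomorphism carries permutation lattices to permutation lattices, the sequence above becomes the desired sequence $0\to P_2\to P_1\to P(\Phi)\to 0$ of $W$-modules with $P_1$ and $P_2$ permutation $W$-lattices. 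The corollary is essentially a formality at this stage: the substance lies entirely in the cited propositions, and the only points requiring a little attention are that the finitely many largeness conditions on $p$ can be met simultaneously and that the $\Gamma$-action on $X(T_p^{\,{\rm gen}})$ genuinely matches the standard $W$-action on $P(\Phi)$ under $\tau$.
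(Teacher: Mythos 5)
Your argument is correct and follows exactly the paper's route: combine Theorem~\ref{pur} (via Theorem~\ref{GK mod p}) with Proposition~\ref{gen} to get $\widetilde{K}_p$-rationality of $Y_p=G_p/T_p^{\,\mathrm{gen}}$, use simple connectedness to identify $X(T_p^{\,\mathrm{gen}})$ with $P(\Phi)$, and then apply Propositions~\ref{galois} and~\ref{lattice}. The only difference is that you spell out the intermediate steps (the finitely many conditions on $p$, the role of Proposition~\ref{proj}, the transport along $\Gamma\cong W$) slightly more explicitly than the paper does.
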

\begin{proof} Combining Theorem~\ref{pur} and Proposition~\ref{gen}
we see that if the Gelfand--Kirillov conjecture holds for $\g$, then
for all $p\gg 0$ the homogeneous space $Y_p=G_{p}/T_p^{\rm gen}$ is
rational over $\widetilde{K}_p$. As $G_{p}$ is simply connected,
$X(T_p^{\rm gen})\cong P(\Phi)$ as $W$-modules. Now the result
follows by applying Propositions~\ref{galois} and~\ref{lattice}
\end{proof}
\subsection{} We are finally ready for the main result of this
paper.
\begin{theorem} Let $\g$ be a finite dimensional simple Lie algebra
over an algebraically closed field of characteristic $0$ and assume
that $\g$ is not of type ${\rm A}_n$, ${\rm C}_n$ of ${\rm G}_2$.
Then the Gelfand--Kirillov conjecture does not hold for $\g$.
\end{theorem}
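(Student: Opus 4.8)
The plan is to contradict Corollary~\ref{W} using a concrete obstruction attached to the root system. Suppose the Gelfand--Kirillov conjecture holds for $\g$. Then for all $p\gg 0$ the homogeneous space $Y_p=G_p/T^{\rm gen}_p$ is $\widetilde{K}_p$-rational, and by Corollary~\ref{W} the weight lattice $P(\Phi)$ of $\g$ sits in a short exact sequence of $W$-modules
\begin{equation}\label{ses-flasque}
0\too P_2\too P_1\too P(\Phi)\too 0
\end{equation}
with $P_1,P_2$ permutation $W$-lattices. The strategy is to show that \eqref{ses-flasque} cannot exist when $\g$ is of type $\mathrm{B}_n$ ($n\ge 3$), $\mathrm{D}_n$ ($n\ge 4$), $\mathrm{E}_6$, $\mathrm{E}_7$, $\mathrm{E}_8$ or $\mathrm{F}_4$. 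The natural invariant to use is the \emph{flasque class} of a lattice (the class in $H^1$ of a flasque resolution), together with the related Tate cohomology groups $\widehat{H}^{\,i}(W',-)$ for subgroups $W'\le W$. A permutation lattice is quasi-permutation and in particular coflasque and flasque-trivial; an extension of $P(\Phi)$ by a permutation lattice that is itself permutation would force $P(\Phi)$ to have trivial flasque class, i.e.\ to be \emph{flasque} in the sense of Colliot-Th\'el\`ene--Sansuc. So the core of the proof is the purely lattice-theoretic assertion: for $\g$ of the listed types, the $W$-lattice $P(\Phi)$ has \emph{nontrivial} flasque class. This is exactly the point where [\cite{CT}] invokes the dual computation for the root lattice and the cohomology of $W$ acting on these lattices; in the present paper one simply cites that this lattice-theoretic fact holds and combines it with \eqref{ses-flasque}.

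Concretely I would carry out the argument in the following steps. First, recall that for a permutation lattice $Q$ over a finite group $\Gamma$ one has $\widehat{H}^{-1}(\Gamma',Q)=0$ for every subgroup $\Gamma'\le\Gamma$, and more generally $Q$ is ``invertible'' (a direct summand of a permutation lattice) with trivial flasque/coflasque class; these properties are stable under the operations appearing in \eqref{ses-flasque}. Second, from the exact sequence $0\to P_2\to P_1\to P(\Phi)\to 0$ with $P_1,P_2$ permutation, deduce that $P(\Phi)$ is itself quasi-permutation and flasque-trivial: the class of $P(\Phi)$ in the monoid of flasque classes equals $[P_1]-[P_2]=0$. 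Third, quote the computation from [\cite{CT}, \S0 and the lattice computations there] — ultimately going back to Cortella--Kunyavski\u\i\ and to Voskresenski\u\i --Kunyavski\u\i\ — that for type $\mathrm{B}_n$, $n\ge 3$, type $\mathrm{D}_n$, $n\ge 4$, and types $\mathrm{E}_6,\mathrm{E}_7,\mathrm{E}_8,\mathrm{F}_4$ the lattice $P(\Phi)$ is \emph{not} quasi-permutation (equivalently, its flasque class is nonzero, detected by a nonvanishing $\widehat{H}^{-1}$ of some subgroup of $W$, or by $H^1$ of the generic torus being nonzero). This contradiction finishes the proof. For the convenience of the reader one can single out the smallest cases: for $\g$ of type $\mathrm{B}_3=\mathrm{C}_3^\vee$... actually $\mathrm{B}_3$, one exhibits a cyclic subgroup $W'\le W$ for which $\widehat{H}^{-1}(W',P(\Phi))\ne 0$, and notes this obstruction is inherited by every larger group in the families above by the standard restriction/induction arguments.

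The main obstacle is not the homological bookkeeping — that is routine once one has the flasque-class formalism — but rather the explicit lattice computation showing $P(\Phi)$ is not quasi-permutation for the seven families of types. In the present paper this is handled by \emph{citing} [\cite{CT}, Thm.~0.2(b)] (and the references therein), so that step becomes a one-line invocation; the genuinely new content of the paper is everything preceding it, namely the reduction from characteristic $0$ to characteristic $p$ (Theorem~\ref{GK mod p}), the passage to purity of $\k(\g_\k^*)/\k(\g_\k^*)^{G_\k}$ via Tange's description of the Zassenhaus variety (Theorem~\ref{pur}), and the translation of that purity into $\widetilde{K}_p$-rationality of $G_p/T^{\rm gen}_p$ together with the modular ``\'E.~Cartan'' splitting result (Propositions~\ref{gen}, \ref{proj}, \ref{galois}, \ref{lattice}). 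Thus the proof of the theorem is essentially: assemble Corollary~\ref{W}, then invoke the known obstruction for the exceptional and $\mathrm{B}/\mathrm{D}$ types to reach a contradiction, and conclude that $\mathcal{D}(\g)\not\cong\mathcal{D}_{r,s}(\mathbb K)$ for any $r,s$, i.e.\ the Gelfand--Kirillov conjecture fails for $\g$.
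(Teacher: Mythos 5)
Your proposal is correct and follows essentially the same route as the paper: the paper's proof is exactly ``apply Corollary~\ref{W} and then invoke the lattice-theoretic classification from [\cite{CT}]'', the only difference being that the paper cites [\cite{CT}, Prop.~7.1] directly (the statement that a short exact sequence $0\to P_2\to P_1\to P(\Phi)\to 0$ with $P_1,P_2$ permutation $W$-lattices forces $\Phi$ to be of type ${\rm A}_n$, ${\rm C}_n$ or ${\rm G}_2$), whereas you unpack the flasque/quasi-permutation formalism behind that proposition before citing it. Your extra discussion of $\widehat{H}^{-1}$ and flasque classes is background to the cited result rather than a different argument, so the two proofs coincide in substance.
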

\begin{proof}
By [\cite{CT}, Prop.~7.1], it follows from the existence of a short
exact sequence $0\to P_2\to P_1\to P(\Phi)\to 0$ with $P_1$ and
$P_2$ permutation $W$-lattices that $\Phi$ is of type ${\rm A}_n$,
${\rm C}_n$ of ${\rm G}_2$. Applying Corollary~\ref{W} finishes the
proof.
\end{proof}

\end{document}